\patchcmd{\nocite}{\ifx\@onlypreamble\document}{\iftrue}{}{}
\g@addto@macro\bfseries{\boldmath}
\g@addto@macro\@floatboxreset{\centering}
\theoremstyle{definition}
\newtheorem{definition}{Definition}[section]
\newtheorem{theorem}[definition]{Theorem}
\newtheorem*{theorem*}{Theorem}
\newtheorem{proposition}[definition]{Proposition}
\newtheorem{lemma}[definition]{Lemma}
\newtheorem{corollary}[definition]{Corollary}
\newtheorem*{fact*}{Fact}
\newtheorem{remark}[definition]{Remark}
\newtheorem{example}[definition]{Example}
\newtheorem{challenge}[definition]{Challenge}
\newtheorem{conjecture}[definition]{Conjecture}
\newsavebox{\@brx}
\newcommand{\llangle}[1][]{\savebox{\@brx}{\(\m@th{#1\langle}\)}%
  \mathopen{\copy\@brx\kern-0.5\wd\@brx\usebox{\@brx}}}
\newcommand{\rrangle}[1][]{\savebox{\@brx}{\(\m@th{#1\rangle}\)}%
  \mathclose{\copy\@brx\kern-0.5\wd\@brx\usebox{\@brx}}}
\newcommand{\compl}[1]{\overline{#1}}
\newcommand{\floor}[1]{\left\lfloor #1 \right\rfloor}
\newcommand{\w}[1]{\mathtt{#1}}
\newcommand{\id}{\mathrm{id}}
\newcommand{\restr}{\mathrm{restr}}
\newcommand{\contr}{\mathrm{contr}}
\newcommand{\marg}{\mathrm{marg}}
\newcommand{\cond}{\mathrm{cond}}
\renewcommand{\subset}{\subseteq}
\renewcommand{\searrow}{\!\downarrow\!}
\renewcommand{\nearrow}{\!\uparrow\!}
\renewcommand{\rho}{\varrho}
\newcommand{\Mod}[1]{\ (\mathrm{mod}\ #1)}
\newcommand{\gaussoidof}[1]{\llangle #1 \rrangle}
\DeclareMathSymbol{*}{\mathord}{symbols}{"03}
\newcommand{\A}{\mathcal{A}}
\renewcommand{\AA}{\mathfrak{A}}
\newcommand{\C}{\mathcal{C}}
\newcommand{\F}{\mathcal{F}}
\newcommand{\G}{\mathcal{G}}
\newcommand{\GG}{\mathfrak{G}}
\newcommand{\J}{\mathcal{J}}
\renewcommand{\O}{\mathcal{O}}
\newcommand{\R}{\mathcal{R}}
\renewcommand{\S}{S}
\newcommand{\X}{\mathcal{X}}
\newcommand{\Lnenicka}{Ln{\v e}ni{\v c}ka\xspace}
\newcommand{\Matus}{Mat{\'u}{\v{s}}\xspace}
\definecolor{darkblue}{RGB}{0,0,160}
\newcommand{%
  \begin{tikzpicture}[%
  A/.style     ={fill=none},
  B/.style     ={fill=none},
  C/.style     ={fill=none},
  D/.style     ={fill=none},
  E/.style     ={fill=none},
  F/.style     ={fill=none},
  G/.style     ={fill=none},
  H/.style     ={fill=none},
  AB/.style    ={draw=black},
  AD/.style    ={draw=black},
  AE/.style    ={draw=black},
  BC/.style    ={draw=black},
  BF/.style    ={draw=black},
  CD/.style    ={draw=black},
  CG/.style    ={draw=black},
  DH/.style    ={draw=black},
  EF/.style    ={draw=black},
  EH/.style    ={draw=black},
  FG/.style    ={draw=black},
  GH/.style    ={draw=black},
  front/.style ={white},
  bottom/.style={white},
  top/.style   ={white},
  back/.style  ={white},
  left/.style  ={white},
  right/.style ={white},
  
  ]%

\coordinate (A) at (0,0);
\coordinate (B) at (4,0);
\coordinate (C) at (4,4);
\coordinate (D) at (0,4);
\coordinate (E) at (1,1);
\coordinate (F) at (3,1);
\coordinate (G) at (3,3);
\coordinate (H) at (1,3);

\tikzstyle{vertex}=[draw=none,shape=circle,fill=black,minimum size=3pt,inner sep=0]

\draw[front,fill,even odd rule]
  (-1,-1) rectangle (5,5)
  (0,0)   rectangle (4,4);

\fill[back]   (E) -- (F) -- (G) -- (H) -- cycle;
\fill[left]   (A) -- (E) -- (H) -- (D) -- cycle;
\fill[bottom] (A) -- (B) -- (F) -- (E) -- cycle;
\fill[right]  (B) -- (C) -- (G) -- (F) -- cycle;
\fill[top]    (C) -- (D) -- (H) -- (G) -- cycle;

\path
  (A) edge [AB] (B)
  (A) edge [AD] (D)
  (A) edge [AE] (E)
  (B) edge [BC] (C)
  (B) edge [BF] (F)
  (C) edge [CD] (D)
  (C) edge [CG] (G)
  (D) edge [DH] (H)
  (E) edge [EF] (F)
  (E) edge [EH] (H)
  (F) edge [FG] (G)
  (G) edge [GH] (H)
;

\node [vertex,A] at (A) {};
\node [vertex,B] at (B) {};
\node [vertex,C] at (C) {};
\node [vertex,D] at (D) {};
\node [vertex,E] at (E) {};
\node [vertex,F] at (F) {};
\node [vertex,G] at (G) {};
\node [vertex,H] at (H) {};
  \end{tikzpicture}%
}[1]{%
  \begin{tikzpicture}[%
  A/.style     ={fill=none},
  B/.style     ={fill=none},
  C/.style     ={fill=none},
  D/.style     ={fill=none},
  E/.style     ={fill=none},
  F/.style     ={fill=none},
  G/.style     ={fill=none},
  H/.style     ={fill=none},
  AB/.style    ={draw=black},
  AD/.style    ={draw=black},
  AE/.style    ={draw=black},
  BC/.style    ={draw=black},
  BF/.style    ={draw=black},
  CD/.style    ={draw=black},
  CG/.style    ={draw=black},
  DH/.style    ={draw=black},
  EF/.style    ={draw=black},
  EH/.style    ={draw=black},
  FG/.style    ={draw=black},
  GH/.style    ={draw=black},
  front/.style ={white},
  bottom/.style={white},
  top/.style   ={white},
  back/.style  ={white},
  left/.style  ={white},
  right/.style ={white},
  #1
  ]%

\coordinate (A) at (0,0);
\coordinate (B) at (4,0);
\coordinate (C) at (4,4);
\coordinate (D) at (0,4);
\coordinate (E) at (1,1);
\coordinate (F) at (3,1);
\coordinate (G) at (3,3);
\coordinate (H) at (1,3);

\tikzstyle{vertex}=[draw=none,shape=circle,fill=black,minimum size=3pt,inner sep=0]

\draw[front,fill,even odd rule]
  (-1,-1) rectangle (5,5)
  (0,0)   rectangle (4,4);

\fill[back]   (E) -- (F) -- (G) -- (H) -- cycle;
\fill[left]   (A) -- (E) -- (H) -- (D) -- cycle;
\fill[bottom] (A) -- (B) -- (F) -- (E) -- cycle;
\fill[right]  (B) -- (C) -- (G) -- (F) -- cycle;
\fill[top]    (C) -- (D) -- (H) -- (G) -- cycle;

\path
  (A) edge [AB] (B)
  (A) edge [AD] (D)
  (A) edge [AE] (E)
  (B) edge [BC] (C)
  (B) edge [BF] (F)
  (C) edge [CD] (D)
  (C) edge [CG] (G)
  (D) edge [DH] (H)
  (E) edge [EF] (F)
  (E) edge [EH] (H)
  (F) edge [FG] (G)
  (G) edge [GH] (H)
;

\node [vertex,A] at (A) {};
\node [vertex,B] at (B) {};
\node [vertex,C] at (C) {};
\node [vertex,D] at (D) {};
\node [vertex,E] at (E) {};
\node [vertex,F] at (F) {};
\node [vertex,G] at (G) {};
\node [vertex,H] at (H) {};
  \end{tikzpicture}%
}
\newcommand{%
  \begin{tikzpicture}[%
  A/.style     ={fill=none},
  B/.style     ={fill=none},
  C/.style     ={fill=none},
  D/.style     ={fill=none},
  E/.style     ={fill=none},
  F/.style     ={fill=none},
  G/.style     ={fill=none},
  H/.style     ={fill=none},
  AB/.style    ={draw=black},
  AD/.style    ={draw=black,dashed},
  AE/.style    ={draw=black},
  BC/.style    ={draw=black},
  BF/.style    ={draw=black},
  CD/.style    ={draw=black,dashed},
  CG/.style    ={draw=black},
  DH/.style    ={draw=black,dashed},
  EF/.style    ={draw=black},
  EH/.style    ={draw=black},
  FG/.style    ={draw=black},
  GH/.style    ={draw=black},
  front/.style ={white,opacity=0},
  bottom/.style={white,opacity=0},
  top/.style   ={white,opacity=0},
  back/.style  ={white,opacity=0},
  left/.style  ={white,opacity=0},
  right/.style ={white,opacity=0},
  
  ]%

\coordinate (A) at (0,0,0);
\coordinate (B) at (1,0,0);
\coordinate (C) at (1,0,-1);
\coordinate (D) at (0,0,-1);
\coordinate (E) at (0,1,0);
\coordinate (F) at (1,1,0);
\coordinate (G) at (1,1,-1);
\coordinate (H) at (0,1,-1);

\tikzstyle{vertex}=[draw=none,shape=circle,fill=black,minimum size=3pt,inner sep=0]

\fill[bottom] (A) -- (B) -- (C) -- (D) -- cycle;
\fill[back]   (D) -- (C) -- (G) -- (H) -- cycle;
\fill[left]   (A) -- (D) -- (H) -- (E) -- cycle;
\fill[right]  (B) -- (C) -- (G) -- (F) -- cycle;
\fill[top]    (E) -- (F) -- (G) -- (H) -- cycle;
\fill[front]  (A) -- (B) -- (F) -- (E) -- cycle;

\path
  (A) edge [AB] (B)
  (A) edge [AD] (D)
  (A) edge [AE] (E)
  (B) edge [BC] (C)
  (B) edge [BF] (F)
  (C) edge [CD] (D)
  (C) edge [CG] (G)
  (D) edge [DH] (H)
  (E) edge [EF] (F)
  (E) edge [EH] (H)
  (F) edge [FG] (G)
  (G) edge [GH] (H)
;

\node [vertex,A] at (A) {};
\node [vertex,B] at (B) {};
\node [vertex,C] at (C) {};
\node [vertex,D] at (D) {};
\node [vertex,E] at (E) {};
\node [vertex,F] at (F) {};
\node [vertex,G] at (G) {};
\node [vertex,H] at (H) {};
  \end{tikzpicture}%
}[1]{%
  \begin{tikzpicture}[%
  A/.style     ={fill=none},
  B/.style     ={fill=none},
  C/.style     ={fill=none},
  D/.style     ={fill=none},
  E/.style     ={fill=none},
  F/.style     ={fill=none},
  G/.style     ={fill=none},
  H/.style     ={fill=none},
  AB/.style    ={draw=black},
  AD/.style    ={draw=black,dashed},
  AE/.style    ={draw=black},
  BC/.style    ={draw=black},
  BF/.style    ={draw=black},
  CD/.style    ={draw=black,dashed},
  CG/.style    ={draw=black},
  DH/.style    ={draw=black,dashed},
  EF/.style    ={draw=black},
  EH/.style    ={draw=black},
  FG/.style    ={draw=black},
  GH/.style    ={draw=black},
  front/.style ={white,opacity=0},
  bottom/.style={white,opacity=0},
  top/.style   ={white,opacity=0},
  back/.style  ={white,opacity=0},
  left/.style  ={white,opacity=0},
  right/.style ={white,opacity=0},
  #1
  ]%

\coordinate (A) at (0,0,0);
\coordinate (B) at (1,0,0);
\coordinate (C) at (1,0,-1);
\coordinate (D) at (0,0,-1);
\coordinate (E) at (0,1,0);
\coordinate (F) at (1,1,0);
\coordinate (G) at (1,1,-1);
\coordinate (H) at (0,1,-1);

\tikzstyle{vertex}=[draw=none,shape=circle,fill=black,minimum size=3pt,inner sep=0]

\fill[bottom] (A) -- (B) -- (C) -- (D) -- cycle;
\fill[back]   (D) -- (C) -- (G) -- (H) -- cycle;
\fill[left]   (A) -- (D) -- (H) -- (E) -- cycle;
\fill[right]  (B) -- (C) -- (G) -- (F) -- cycle;
\fill[top]    (E) -- (F) -- (G) -- (H) -- cycle;
\fill[front]  (A) -- (B) -- (F) -- (E) -- cycle;

\path
  (A) edge [AB] (B)
  (A) edge [AD] (D)
  (A) edge [AE] (E)
  (B) edge [BC] (C)
  (B) edge [BF] (F)
  (C) edge [CD] (D)
  (C) edge [CG] (G)
  (D) edge [DH] (H)
  (E) edge [EF] (F)
  (E) edge [EH] (H)
  (F) edge [FG] (G)
  (G) edge [GH] (H)
;

\node [vertex,A] at (A) {};
\node [vertex,B] at (B) {};
\node [vertex,C] at (C) {};
\node [vertex,D] at (D) {};
\node [vertex,E] at (E) {};
\node [vertex,F] at (F) {};
\node [vertex,G] at (G) {};
\node [vertex,H] at (H) {};
  \end{tikzpicture}%
}
\newcommand{\graphedgestyle}[1]{\tikzset{#1/.style={draw=black}}}
\NewDocumentCommand{\graph}{>{\SplitList{,}}m O{scale=0.4}}{%
\begin{tikzpicture}[
  E1/.style = {draw=none},
  E2/.style = {draw=none},
  E3/.style = {draw=none},
  #2
]

\tikzset{every node/.style={draw,shape=circle,fill=black,minimum size=4pt,inner sep=0}}
\tikzset{every edge/.style={line width=1pt}}
\ProcessList{#1}{\graphedgestyle}

\node (N1) at ( 0,  0) {};
\node (N2) at ( 1, -1.618) {};
\node (N3) at (-1, -1.618) {};

\path (N1) edge [E1] (N2);
\path (N2) edge [E2] (N3);
\path (N3) edge [E3] (N1);

\end{tikzpicture}
}
\begin{document}

\title{Construction Methods for Gaussoids}
\author{Tobias Boege and Thomas Kahle}
\date{December 11, 2019}

\keywords{gaussoid, conditional independence, normal distribution, cube, minor}
\subjclass[2010]{05B99, 05B35, 60E05}



\begin{abstract}
\noindent
The number of $n$-gaussoids is shown to be a double exponential
function in~$n$.  The necessary bounds are achieved by studying
construction methods for gaussoids that rely on prescribing $3$-minors
and encoding the resulting combinatorial constraints in a suitable
transitive graph.  Various special classes of gaussoids arise from
restricting the allowed $3$-minors.
\end{abstract}

\maketitle

\section{Introduction}

Gaussoids are combinatorial structures that encode independence among
Gaussian random variables, similar to how matroids encode independence
in linear algebra.  They fall into the larger class of \emph{CI
  structures} which are arbitrary sets of conditional independence
statements.  The work of Fero \Matus is in particular concerned with
special CI structures such as graphoids, pseudographoids,
semigraphoids, separation graphoids, etc.  In his works \Matus
followed the idea that conditional independence can be abstracted away
from concrete random variables to yield a combinatorial theory.  This
should happen in the same manner as matroid theory abstracts away the
coefficients from linear algebra.  His work \cite{Matus97} on minors
of CI structures displays the inspiration from matroid theory very
clearly.

In 2007, \Lnenicka and \Matus defined gaussoids~\cite{LnenickaMatus07}
of dimension $n$ as sets of symbols $(ij|K)$, denoting conditional
independence statements, which satisfy the following Boolean formulas,
called the \emph{gaussoid axioms}:
\begin{alignat*}{4}
\tag{G1} \label{ax:G1} & (ij|L)  &&\wedge (ik|jL) &&\Rightarrow (ik|L)  &&\wedge (ij|kL), \\
\tag{G2} \label{ax:G2} & (ij|kL) &&\wedge (ik|jL) &&\Rightarrow (ij|L)  &&\wedge (ik|L),  \\
\tag{G3} \label{ax:G3} & (ij|L)  &&\wedge (ik|L)  &&\Rightarrow (ij|kL) &&\wedge (ik|jL), \\
\tag{G4} \label{ax:G4} & (ij|L)  &&\wedge (ij|kL) &&\Rightarrow (ik|L)  &&\vee   (jk|L),
\end{alignat*}
for all distinct $i, j, k \in [n]$ and
$L \subseteq [n] \setminus ijk$.  Here and in the following, we use
the efficient ``\Matus set notation'' where union is written as
concatenation and singletons are written without curly braces.  For
example, $ijk$ is shorthand for $\{i\}\cup\{j\}\cup\{k\}$.

A gaussoid is \emph{realizable} if its elements are exactly the
conditional independence statements that are valid for some
$n$-variate normal distribution.  Realizability was characterized for
$n=4$ in \cite{LnenickaMatus07} and a characterization for $n=5$ is
open.  There is no general forbidden minor characterization for
realizability of gaussoids~\cite{Simecek06,Sullivant}.  We therefore
think about gaussoids as \emph{synthetic conditional independence} in
the sense of Felix Klein~\cite[Chapter~V]{klein2016elementary}.  This
view is inspired by the parallels to matroid theory.  The algebra and
geometry of gaussoids was developed with this in mind
in~\cite{Geometry}.  Gaussoids are also the singleton-transitive
compositional graphoids according
to~\cite[Section~2.3]{sadeghi2017faithfulness}.

In the present paper we view gaussoids as structured subsets of
$2$-faces of an $n$-cube, which is possible because the symbols
$(ij|K)$ exactly index those faces.  This point of view on CI statements
was taken before in~\cite{ThreeCounterexamples}.  It readily simplifies
the definition of a gaussoid, but it has several additional advantages.
For example, it makes the formation of minors more effective, as this
now corresponds to restricting to faces of the cube.  To start,
consider the usual 3-dimensional cube.  A \emph{knee} in the cube
consists of two squares that share an edge.  A \emph{belt} consists of
all but two opposing squares of the cube.  We give a combinatorial
definition of a gaussoid that uses knees and belts.  It is equivalent
to the original definition in \cite{LnenickaMatus07} since both
definitions only specify how a gaussoid looks in a $3$-face of the
$n$-cube and there both definitions yield the exact same 11
possibilities appearing in Figure~\ref{fig:3-gaussoid-symmetry}.

\begin{definition}\label{d:withBelts}
  An $n$-gaussoid is a set $\G$ of $2$-faces of the $n$-cube such that
  for any $3$-face $C$ of the $n$-cube it holds:
  \begin{enumerate}
  \item If $\G$ contains a knee of $C$, then it also contains the belt
    that contains that knee.
  \item If $\G$ contains two opposing faces of $C$, then it also
    contains a belt that contains these two faces.
  \end{enumerate}
  The \emph{dimension} of~$\G$ is $n$, the dimension of the ambient
  cube.  $\GG_n$ is the set of $n$-dimensional gaussoids and
  $\GG := \bigcup_{n \ge 3} \GG_n$ the set of all gaussoids.
\end{definition}

\begin{figure}
\input{belt-axioms.fig}
\label{fig:belt-axioms}
\end{figure}

This definition is illustrated in Figure~\ref{fig:belt-axioms} where
the premises are purple and the conclusions are in shades of green.
The ``or'' condition of the gaussoid axiom \eqref{ax:G4} is reflected
in the second item of the definition as there are two belts through
two opposing faces of a 3-cube.  As with the gaussoid axioms,
Definition~\ref{d:withBelts} applies certain closure rules in every
$3$-face of the $n$-cube, but whereas $\S_3$ acts on the axes of the
cube in the gaussoid axioms, the group acting on the two pictures in
Figure~\ref{fig:belt-axioms} is the full symmetry group of the
$3$-cube, $B_3$. This bigger group conflates the first three axioms
into the first picture.

There is a small caveat in our definition of dimension of a gaussoid
as it equals that of the ambient cube.  Due to this choice, the empty
set, for example, can represent gaussoids of different dimension, just
as there is one empty matroid for every ground set. Usually, this will
cause no confusion but it underlines the importance of keeping in mind
that a gaussoid is always a \emph{subset} of squares in some ambient
cube.

The gaussoid axioms and also Definition~\ref{d:withBelts} only work
with $3$-cubes.  This locality can be expressed as in
Lemma~\ref{lemma:puzzling}: For any $k\ge 3$, being an $n$-gaussoid is
equivalent to all restrictions to $k$-faces being $k$-gaussoids.  The
aim of this work is to explore \emph{gaussoid puzzling}, the reversal
of this idea, that is, constructing $n$-gaussoids by prescribing their
$k$-gaussoids.  The implementation hinges on an understanding of how
exactly the $k$-faces of the $n$-cube intersect, because these
intersections are obstructions to the free specification of
$k$-gaussoids.  In Section~\ref{s:puzzling} we encode these
obstructions in a graph and then Brooks' theorem gives access to large
independent sets, where gaussoids can be freely placed.  This yields a
good estimate of the number of gaussoids in
Theorem~\ref{theorem:bounds}.

As gaussoids are defined by puzzling it is natural to ask what
additional structures emerge if one restricts the available pieces.
We do this in Section~\ref{s:specialGaussoids} where we explore classes
of ``special gaussoids'' that arise by restricting the puzzling of
$3$-gaussoids to $\S_3$-invariant subsets of the $11$ available pieces.
Consequently all of these classes can be axiomatized with $\S_{n}$-invariant
axioms that prescribe only the structure of $3$-cubes (like the gaussoid
axioms).  We find natural connections to the theory of graphical models
and other nice interpretations.

\subsection*{Acknowledgement}
The authors are supported by the Deutsche Forschungsgemeinschaft
(314838170, GRK 2297, ``MathCoRe'').

\section{The cube}
\label{sec:hypercube-minors}

Consider the face lattice $\F^{n}$ of the $n$-cube.  This lattice
contains $\emptyset$, the unique face of dimension $-\infty$.  To
specify a face of non-negative dimension $k$, one needs to specify the
$k$ dimensions in which the face extends, and then the location of the
face in the remaining $n-k$ dimensions.  We employ two natural ways to
work with faces.  The first is \emph{string notation}.  In this
notation a face $F$ is an element of $\{\w0,\w1,\w{*}\}^{n}$ where the
$\w*$s indicate dimensions in which the face extends and the remaining
binary string determines the location; a $\w1$ at position $p$ means
that the face is translated along the $p$-th axis inside the cube.
This string notation naturally extends the binary string notation for
the vertices of the $n$-cube: if $F\in\{\w0,\w1,\w{*}\}^{n}$, then its
vertices are
\[
  \left\{ a \in \{\w0,\w1\}^{n} : a_{i} = F_{i} \text{ whenever }
    F_{i} \neq \w*\right\}.
\]

The second choice is \emph{set notation}.  In this notation, a face
$F = (I_{F}|K_{F})$ of dimension $k = |I_F|$ is specified by two sets
$I_{F}\subset [n]$ and $K_{F} \subset [n]\setminus I_{F}$, where
$I_{F} = \{i\in[n] : F_{i} = \w*\}$ and
$K_{F} = \{i\in[n] : F_{i} = \w1\}$.  In this context the symbol
$\oplus$ denotes symmetric difference and $\compl{L}$ denotes the
complement of $L$ in $[n]$.

The set of $k$-faces of the $n$-cube is~$\F^n_k$.
As in~\cite{Geometry}, the squares of the $n$-cube are denoted by
$\A_n := \F^n_2$. Of special interest in this article are also the
$3$-faces $\C_n := \F^n_3$.  The constructions in
Section~\ref{s:puzzling} based on Lemma~\ref{lemma:puzzling}
frequently exploit the following

\begin{fact*}\label{fact:shared-face-dichotomy} For $3 \le k \le m$, a
  $k$-face shares at most $\binom{k-1}{2} 2^{k-3}$ squares with an
  $m$-face or is already included in it.  In particular for $k=3$, if
  a cube shares more than a single square with an $m$-face, then it is
  already contained in it.
\end{fact*}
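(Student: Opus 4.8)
The plan is to reduce the whole statement to a square-count inside a single auxiliary face, namely the intersection $F \cap G$. Fix a $k$-face $F = (I_F|K_F)$ and an $m$-face $G = (I_G|K_G)$ with $3 \le k \le m$. The first observation is that a square is shared by $F$ and $G$ exactly when it is contained in both, i.e.\ when it is contained in $F \cap G$. Hence the number of shared squares equals the number of squares lying in $F \cap G$, and the entire claim becomes a statement about this one face.

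Next I would pin down $F \cap G$. The intersection of two faces of the $n$-cube is again a face, and in string notation it can be read off coordinatewise: it is empty as soon as $F$ and $G$ carry opposite bits $\w0/\w1$ in some coordinate, and otherwise it is the face whose $\w*$-positions are exactly $I_F \cap I_G$, with all remaining bits inherited. Thus, when nonempty, $F \cap G$ has dimension $d := |I_F \cap I_G|$. I would then count the squares inside a $d$-face: such a square is obtained by choosing $2$ of the $d$ free directions to stay free and fixing each of the other $d-2$ free directions to $\w0$ or $\w1$, which gives exactly $\binom{d}{2} 2^{d-2}$ squares (and none when $d < 2$ or when $F \cap G = \emptyset$).

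Finally comes the dichotomy. If $I_F \subseteq I_G$ and $F \cap G \neq \emptyset$, then every coordinate outside $I_G$ lies outside $I_F$ as well and is fixed in both faces; nonemptiness forces the two bits to agree there, so $F \subseteq G$ and $F$ is already included in $G$, with nothing left to bound. Otherwise $F \cap G$ is a proper subface of $F$ (or empty), so its dimension satisfies $d \le k-1$. Since the ratio of consecutive counts is $\binom{d+1}{2}2^{d-1}\big/\big(\binom{d}{2}2^{d-2}\big) = 2(d+1)/(d-1) > 1$ for $d \ge 2$, the quantity $\binom{d}{2}2^{d-2}$ is strictly increasing in $d$ and is therefore maximized over $d \le k-1$ at $d = k-1$. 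This yields the bound $\binom{k-1}{2}2^{k-3}$, and for $k = 3$ it specializes to $\binom{2}{2}2^{0} = 1$, so a $3$-face sharing two or more squares with an $m$-face must be contained in it.

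I do not anticipate a serious obstacle: the argument is short and the content is combinatorial bookkeeping. The only places that require genuine care are the coordinatewise description of $F \cap G$ (including that it is itself a face) and the equivalence between ``$I_F \subseteq I_G$ with nonempty intersection'' and ``$F \subseteq G$'', after which the monotonicity of $\binom{d}{2}2^{d-2}$ is an elementary check.
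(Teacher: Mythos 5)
Your argument is correct: reducing the shared squares to the squares of the face $F\cap G$, identifying its dimension as $|I_F\cap I_G|$, and observing that this is at most $k-1$ unless $F\subseteq G$ gives exactly the claimed bound, and the monotonicity check of $\binom{d}{2}2^{d-2}$ is right. The paper states this as a Fact without proof, so there is no authorial argument to compare against; what you wrote is the natural justification and fills that gap cleanly.
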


Minors are important in matroid theory and gaussoid theory.  When a
simple matroid is represented as the geometric lattice of its flats, a
minor corresponds to an interval of the
lattice~\cite[Theorem~4.4.3]{Welsh10}, which is again a geometric
lattice. For gaussoid minors the lattice is replaced by the set of
squares in the hypercube and the lattice intervals are replaced by
hypercube faces.

Minors for arbitrary CI structures have been studied for example
in~\cite{Matus97}.  There, a \emph{minor} of a CI structure
$\A \subset \A_{n}$ is obtained by choosing two disjoint sets
$L, M \subseteq [n]$ and performing \emph{restriction} to $LM$
followed by \emph{contraction} by $\compl{L}$.  Writing
$\A_{L} = \{(ij|K) \in \A_{n} : ijK \subset L\}$ these are
\begin{align*}
  \contr_L \A &= \{ (ij|K) \in \A_L : (ij|K\compl{L}) \in \A \} \subseteq \A_L, \\
  \restr_L \A &= \A \cap \A_L \subseteq \A_L.
\end{align*}
In~\cite{Geometry}, minors were also defined specifically for gaussoids
using statistical terminology with an emphasis on the parallels to
matroid theory. A minor is every set of squares arising from a
gaussoid via any sequence of \emph{marginalization} and
\emph{conditioning}:
\begin{align*}
  \marg_L \A &= \{ (ij|K) \in \A : L \subseteq \compl{ijK} \}    \subseteq \A_{\compl{L}}, \\
  \cond_L \A &= \{ (ij|K) \in \A_{\compl{L}} : (ij|KL) \in \A \} \subseteq \A_{\compl{L}}.
\end{align*}
These operations are dual to the ones defined by \Matus:
$\cond_L = \contr_{\compl{L}}$ and $\marg_L = \restr_{\compl{L}}$.
Furthermore, either operation can be the identity, $\restr_{[n]} = \id$
and $\contr_{[n]} = \id$, and finally, the two sets $L$ and $M$ in
\Matus' definition of minor can be decoupled:
$\contr_L \restr_{LM} = \restr_L \contr_{\compl{M}}$.  Thus both
notions of minor coincide.

Our aim is to provide a geometric intuition for the
act of taking a gaussoid minor. A~face $(L|M)$ of the $n$-cube is
canonically isomorphic to the $L$-cube by deleting from the $[n]$-cube
$\{\w0, \w1\}^{[n]}$ all coordinates outside of~$L$.  This deletion is
a lattice isomorphism
$\pi_{(L|M)} : \F^{n} \cap (L|M) \leftrightarrow \F^L$, with the face
lattice $\F^{L}$ of an $|L|$-dimensional cube.  We~can interpret
taking the minor $\restr_L \cond_M$ as an operation in the hypercube.

\begin{proposition} \label{prop:minor-via-face}
Let $\A \subseteq \A_n$, then $\restr_L \cond_M \A = \pi_{(L|M)}
(\A \cap (L|M))$.
\end{proposition}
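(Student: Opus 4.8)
The plan is to verify the identity by expanding both sides into explicit sets of symbols $(ij|K)$ and matching them term by term. No structural facts about gaussoids enter, since $\A$ is an arbitrary subset of $\A_n$; the statement is purely a bookkeeping identity about the two notations. Throughout I would use that $(L|M)$ is a genuine face, so $L$ and $M$ are disjoint and in particular $L \subseteq \compl{M}$.

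First I would unwind the left-hand side. By definition $\cond_M \A = \{(ij|K) \in \A_{\compl{M}} : (ij|KM) \in \A\}$, and $\restr_L \B = \B \cap \A_L$. Since $L \subseteq \compl{M}$ we have $\A_L \subseteq \A_{\compl{M}}$, so intersecting with $\A_L$ makes the constraint $ijK \subseteq \compl{M}$ redundant. Hence
\[
  \restr_L \cond_M \A = \{(ij|K) \in \A_L : (ij|KM) \in \A\}.
\]

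Next I would describe the right-hand side, where the key is to read off, in set notation, which squares of the $n$-cube lie below $(L|M)$ in the face lattice. Using string notation, a face $G$ satisfies $G \le (L|M)$ exactly when the free directions of $G$ lie in $L$ and, in every coordinate outside $L$, the string of $G$ agrees with that of $(L|M)$, namely $\w1$ on $M$ and $\w0$ on $\compl{LM}$. For a square $G = (ij|K')$ this amounts to $\{i,j\} \subseteq L$ together with $K' \cap \compl{L} = M$, equivalently $M \subseteq K'$ and $K' \setminus M \subseteq L$. So the squares of $\A$ contained in $(L|M)$ are precisely those of the form $(ij|KM)$ with $ijK \subseteq L$ and $(ij|KM) \in \A$. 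Applying the isomorphism $\pi_{(L|M)}$, which deletes all coordinates outside $L$, sends such a square $(ij|KM)$ to the square $(ij|K)$ of the $L$-cube, that is, to the element $(ij|K) \in \A_L$. Therefore
\[
  \pi_{(L|M)}(\A \cap (L|M)) = \{(ij|K) \in \A_L : (ij|KM) \in \A\},
\]
which is exactly the set computed for the left-hand side, proving the proposition.

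The only genuinely delicate step is the containment characterization in the third paragraph: reading the face-lattice order correctly across the switch between string and set notation, and checking that the $\w1$-positions $M$ of the face $(L|M)$ reappear as precisely the conditioning set appended by $\cond_M$. I expect this to be the main obstacle, while the absorption in the first display and the action of $\pi_{(L|M)}$ on a single square are routine.
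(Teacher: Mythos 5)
Your proof is correct and follows essentially the same route as the paper's: both reduce the claim to the observation that the squares of the $n$-cube contained in the face $(L|M)$ are exactly those of the form $(ij|K'M)$ with $ijK' \subseteq L$, on which $\pi_{(L|M)}$ acts by stripping off $M$. The paper phrases this as two elementwise inclusions in set notation while you compute both sides as explicit sets (using string notation for the face-containment criterion), but the underlying verification is the same.
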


\begin{proof}
Take $(ij|K') \in \restr_L \cond_M \A$. Then $ij$ and $K'$ can be seen as
subsets of $[n]$ and they satisfy $ijK' \subseteq L$ and $(ij|K'M) \in \A$.
From this it is immediate that $ij \subseteq L$ and $K'M \subseteq LM$.
Furthermore, $\compl{ijK'M} = \compl{ijK'} \cap \compl{M} \subseteq
L\compl{LM}$, hence $(ij|K'M) \subseteq (L|M)$ and $(ij|K') \in
\pi_{(L|M)} (\A \cap (L|M))$.

In the other direction, suppose that $(ij|K') \in \pi_{(L|M)} (\A \cap (L|M))$
and let $(ij|K)$ be its preimage under $\pi_{(L|M)}$. Then $(ij|K) \in
\A \cap (L|M)$ and it follows $ij \subseteq L$, $K \subseteq LM$ and also
$M \subseteq K$. Thus $K$ decomposes into $K = K'M$ where naturally
$K' \cap M = \emptyset$. This proves that $(ij|K') \in \restr_L \cond_M \A$.
\end{proof}

Proposition~\ref{prop:minor-via-face} compactly encodes the
definitions of minor. The following definition introduces notation
reflecting this as well as an opposite \emph{embedding}, which mounts
a set of squares from the $L$-cube into an $|L|$-dimensional face of a
higher hypercube.

\begin{definition} \label{def:minor}
\begin{inparaenum}[(1)]
\item \label{def:minor-minor}
For a set $\A \subseteq \A_n$ and $(L|M) \in \F_k^n$, the
\emph{$(L|M)$-minor} of $\A$ is the set $\A \searrow (L|M) :=
\pi_{(L|M)} (\A \cap (L|M)) \subseteq \A_L$.
A \emph{$k$-minor} is an $(L|M)$-minor with~$|L| = k$.

\item \label{def:minor-embedding}
For a set $\A \subseteq \A_L$ and $(L|M) \in \F_k^n$, the
\emph{embedding} of $\A$ \emph{into $(L|M)$} is the preimage
$\A \nearrow (L|M) := \pi_{(L|M)}^{-1}(\A) \subseteq \A_n$.
\end{inparaenum}
\end{definition}

\section{Gaussoid puzzles}
\label{s:puzzling}

Several theorems in matroid theory concern the (impossibility of a)
characterization of classes of matroids in terms of forbidden minors.
For CI structures such as gaussoids the definitions read as follows.
\begin{definition}
\begin{enumerate}[(1)]
\item A class $\AA \subseteq \bigcup_n 2^{\A_n}$ of sets of squares
is \emph{minor-closed} if with $\A \in \AA$ all minors of $\A$ belong
to $\AA$.

\item A set of squares $\X$ is a \emph{forbidden minor}
for a minor-closed class $\AA$ if it is minimal with the property
that it does not belong to $\AA$, in the sense that all its proper
minors do belong to $\AA$.

\item The $k$-dimensional structures in a minor-closed class
$\AA$ are its \emph{compulsory $k$-minors}.
\end{enumerate}
\end{definition}

It is easy to see that gaussoids are minor-closed, i.e.\ any $k$-minor
of an $n$-gaussoid is always a $k$-gaussoid. But even more is true:
given any set of squares in the $n$-cube, if all of its $k$-minors,
for any $k \ge 3$, are $k$-gaussoids, then the whole is an
$n$-gaussoid.  This claim is proved in Lemma~\ref{lemma:puzzling}.
The present section uses this property to construct gaussoids by
prescribing their $k$-minors.  Section~\ref{s:specialGaussoids}
investigates subclasses of gaussoids which have the same anatomy.
We formalize this property in

\begin{definition} \label{def:puzzle-property} A class
  $\AA = \bigcup_{n \ge n_0} \AA_n$ of sets of squares stratified by
  dimension, i.e.\ $\AA_n \subseteq 2^{\A_n}$, has a \emph{puzzle
    property} if it is minor-closed and its $n$-th stratum is
  generated via embeddings from the strata below $n$, i.e.\ if for
  some $\A \subseteq \A_n$ all its $k$-minors, $k < n$, are in $\AA_k$,
  then already $\A \in \AA_n$.  The lowest stratum $\AA_{n_{0}}$ is the
  \emph{basis} of $\AA$ and the puzzle property is \emph{based} in
  dimension~$n_0$.
\end{definition}

\begin{lemma} \label{lemma:puzzling}
The set of gaussoids has a puzzle property based in dimension $3$,
whose basis are the eleven $3$-gaussoids.
\end{lemma}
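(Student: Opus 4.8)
The lemma has two halves. First, minor-closedness: I need to show every $k$-minor of an $n$-gaussoid is a $k$-gaussoid. Second, the puzzle property proper: if $\A \subseteq \A_n$ is such that every $k$-minor ($k < n$, equivalently just $k = 3$) is a gaussoid, then $\A$ itself is a gaussoid. The stated basis claim — that the eleven $3$-gaussoids of Figure~\ref{fig:3-gaussoid-symmetry} are exactly $\GG_3$ — is the degenerate $n=3$ case where $\A$ is its own sole $3$-minor, so it is tautological once the definitions match; I would dispatch it in one sentence.

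**Reducing to $3$-faces.** The key observation is that Definition~\ref{d:withBelts} is entirely local: being an $n$-gaussoid is a conjunction of conditions, one for each $3$-face $C \in \C_n$, each condition referring only to the squares of $\A$ lying inside $C$. Via $\pi_{(L|M)}$, the squares of $\A$ inside a $3$-face $C = (L|M)$ correspond bijectively and lattice-isomorphically (Proposition~\ref{prop:minor-via-face}) to the squares of the $3$-minor $\A \searrow (L|M)$ inside the full $3$-cube $\F^L$. Knees and belts are defined purely in terms of the face lattice of a $3$-cube, so this isomorphism carries knees to knees and belts to belts. Hence the local gaussoid condition holds at $C$ in $\A$ if and only if the minor $\A \searrow (L|M)$ satisfies the gaussoid conditions in the whole $3$-cube, i.e.\ is a $3$-gaussoid. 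This single equivalence is the engine for both halves.

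**Assembling both directions.** For minor-closedness, fix an $n$-gaussoid $\A$ and an arbitrary $k$-minor $\A' = \A \searrow (L'|M')$; I must check Definition~\ref{d:withBelts} at each $3$-face $C'$ of $\F^{L'}$. Each such $C'$ pulls back under $\pi_{(L'|M')}^{-1}$ to a $3$-face $C$ of the $n$-cube, and since taking minors composes (the minor of $\A$ at $C'$-within-$L'$ equals the minor of $\A$ at the corresponding $3$-face $C$ of the $n$-cube), the local condition for $\A'$ at $C'$ coincides with the already-satisfied local condition for $\A$ at $C$. For the converse puzzle direction, suppose every $3$-minor of $\A \subseteq \A_n$ is a $3$-gaussoid. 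To verify that $\A$ is an $n$-gaussoid, I check Definition~\ref{d:withBelts} at each $3$-face $C = (L|M) \in \C_n$ directly: the hypothesis gives that $\A \searrow (L|M)$ is a $3$-gaussoid, and by the boxed equivalence above this is exactly the local condition on $\A$ at $C$. As $C$ was arbitrary, $\A$ is an $n$-gaussoid. The reduction from general $k<n$ to $k=3$ is automatic because every $3$-face of the $n$-cube sits inside some $k$-face, so a $3$-minor of $\A$ is a $3$-minor of one of $\A$'s $k$-minors; thus controlling all $3$-minors suffices, and the puzzle property based in dimension $3$ follows.

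**The main obstacle.** The conceptual content is light, but the one point demanding care is the \emph{functoriality of minors} — that restricting to a $3$-face sitting inside a $k$-face agrees, under the canonical isomorphisms $\pi$, with first passing to the $k$-minor and then to a $3$-minor. Concretely this is the compatibility $\pi_{(L'|M')} \circ \pi_{(L|M)}^{-1}$ collapsing correctly when $(L'|M')$ is a sub-face, which amounts to the set-theoretic identity $\contr_L \restr_{LM} = \restr_L \contr_{\compl{M}}$ already noted in the excerpt. I would state this composition law as a short preliminary observation (or cite the decoupling identity) and then the two halves of the lemma fall out mechanically from the local equivalence between the gaussoid condition at a $3$-face and the $3$-minor being a $3$-gaussoid.
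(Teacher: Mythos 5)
Your proposal is correct and takes essentially the same route as the paper: both arguments rest on the observation that the gaussoid condition is a conjunction of local conditions over $3$-faces, each equivalent to the corresponding $3$-minor being a $3$-gaussoid, and both handle $k>3$ by noting that the $3$-minors of $\A$ and the $3$-minors of its $k$-minors arise from the same set $\C_n$ of cubes. Your write-up is merely more explicit about the compatibility of composed minor operations under the isomorphisms $\pi_{(L|M)}$, which the paper's proof leaves implicit.
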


\begin{proof} Let $\G \subseteq \A_n$ and $3 \le k \le n$. We show that
$\G$ is an $n$-gaussoid if and only if $\G \searrow D$ is a $k$-gaussoid
for every $D \in \F_k^n$.
First consider the case $k = 3$. The gaussoid axioms are quantified
over arbitrary cubes $(ijk|L)$ together with an order on the set $ijk$
and each axiom refers to squares inside the cube $(ijk|L)$
only. Confined to this cube, the axioms state precisely that this
$3$-minor is a $3$-gaussoid.
The case of $k > 3$ is reduced to the statement for $k = 3$. Indeed,
all $3$-minors of $\G$ are gaussoids if and only if all $3$-minors of
$k$-minors of $\G$ are gaussoids, because those two collections of
minors both arise from the same set $\C_n$ of cubes of the $n$-cube.
\end{proof}

Turning Definition~\ref{def:puzzle-property} upside down, the
construction of an $n$-gaussoid can be seen as a high-dimensional
jigsaw puzzle.  The puzzle pieces are lower-dimensional gaussoids
which are to be embedded into faces of the $n$-cube. The difficulty
comes from the fact that every square is shared by $\binom{n-2}{k-2}$
$k$-faces. The minors must be chosen so that all of them agree on
whether a shared square is an element of the $n$-gaussoid under
construction or not. The incidence structure of $k$-faces in the
$n$-cube is important. We~study it via the following graph.

\begin{definition}
Let $Q(n,k,p,q)$, for $n \ge k \ge p \ge q$, be the undirected simple
graph with vertex set $\F_k^n$ and an edge between $D, F \in \F_k^n$
if and only if there is a $p$-face $S$ such that $\dim(D \cap S) \ge q$
and $\dim(F \cap S) \ge q$.
\end{definition}

The idea behind this definition is that for suitable choices of $p$ and $q$,
the faces indexed by an \emph{independent set} in these graphs will be just
far enough away from each other in the $n$-cube to allow \emph{free puzzling}
of $k$-gaussoids without one minor choice creating constraints
for other minors.

\begin{theorem} \label{theorem:Qnkpq}
The graph $Q(n,k,p,q)$ is transitive, hence regular. It is complete
if and only if $n + q \le p + k$. The degree of any vertex can be
calculated as follows:
\begin{equation*}
  \deg Q(n,k,p,q) = -1 + \sum_{m,j \; \text{\eqref{eq:degree-condition}}}
    \binom{k}{j}2^{k-j}\binom{n-k}{k-j}\binom{n-2k+j}{m},
\end{equation*}
where the sum extends over $0 \le m \le n-k$ and $0 \le j \le k$ which
satisfy the feasibility and connectivity conditions
\begin{equation*}
  \tag{$\dagger$} \label{eq:degree-condition}
  n-2k+j \ge m
  \quad \wedge \quad
  p \ge m + 2q - \min\{q, j\}.
\end{equation*}
\end{theorem}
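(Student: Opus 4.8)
The plan is to reduce all three claims to a single combinatorial description of adjacency, stated in string notation; once that is in hand, transitivity, the completeness criterion, and the degree formula all follow by bookkeeping. Throughout I would identify a face with its pattern in $\{\w0,\w1,\w{*}\}^n$ and write $I_F$ for its star-set, so that $\dim F=|I_F|$ and, for two faces, $\dim(D\cap S)=|I_D\cap I_S|$ whenever $D\cap S\neq\emptyset$, the intersection being nonempty exactly when $D$ and $S$ agree on every coordinate at which both are fixed. Transitivity is then immediate: the hyperoctahedral group $B_n$ of symmetries of the $n$-cube acts transitively on $\F_k^n$ (any $k$-face is carried to any other by permuting axes and flipping coordinates), and each element is a lattice automorphism, hence preserves dimensions of faces and of intersections, and therefore the adjacency relation of $Q(n,k,p,q)$. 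Thus $B_n$ acts as vertex-transitive graph automorphisms, and vertex-transitivity gives regularity.

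The crux is the adjacency characterization, and this is the step I expect to be delicate. Fix $D,F\in\F_k^n$, put $j=|I_D\cap I_F|$, and let $m$ count the coordinates in $\compl{I_D\cup I_F}$ where $D$ and $F$ carry different fixed values, so $0\le m\le n-2k+j$. I claim $D$ and $F$ are adjacent iff $m+2q-\min\{q,j\}\le p$. A $p$-face $S$ witnessing adjacency must satisfy $|I_S\cap I_D|\ge q$ and $|I_S\cap I_F|\ge q$, and must star every one of the $m$ disagreement coordinates (otherwise $S$ could not agree with both $D$ and $F$ there, emptying one intersection); those coordinates lie outside $I_D\cup I_F$. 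Conversely, any $I_S$ meeting these requirements extends to an actual face: the constraints ``$S=D$ off $I_S\cup I_D$'' and ``$S=F$ off $I_S\cup I_F$'' overlap only on $\compl{I_S\cup I_D\cup I_F}$, where $D=F$ because all disagreements were starred, so a compatible value assignment exists. The minimal admissible $I_S$ stores the $m$ disagreements together with two size-$q$ quotas for $I_D$ and $I_F$ overlapped maximally inside $I_D\cap I_F$, costing $m+2q-\min\{q,j\}$ coordinates; since $p\le n$, any admissible size up to $p$ is reachable by harmless padding. Verifying the nonemptiness bookkeeping and that the overlap produces precisely the $\min\{q,j\}$ term is where the argument must be carried out with care.

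Completeness then amounts to requiring $p\ge m+2q-\min\{q,j\}$ for every admissible pair $(j,m)$. I would maximize the right-hand side over the feasible range: it is increasing in $m$, so I take $m=n-2k+j$, and the resulting expression is largest at $j=k$, corresponding to two parallel $k$-faces disagreeing in all $n-k$ of their fixed coordinates; its value is $n-k+q$. Hence $Q(n,k,p,q)$ is complete iff $p\ge n-k+q$, that is, iff $n+q\le p+k$.

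Finally, for the degree I would fix the standard $D$ with $I_D=\{1,\dots,k\}$ and all other entries $\w{0}$, and count the faces $F$ with prescribed $(j,m)$. Choosing $I_F$ means keeping $j$ of $D$'s star-axes and adding $k-j$ new ones from the $n-k$ remaining, giving $\binom{k}{j}\binom{n-k}{k-j}$; on the $k-j$ axes of $I_D\setminus I_F$ the value of $F$ is unconstrained, giving $2^{k-j}$; and among the $n-2k+j$ coordinates of $\compl{I_D\cup I_F}$ exactly $m$ must disagree with $D$, giving $\binom{n-2k+j}{m}$. Summing the product over all $(m,j)$ satisfying the feasibility bound $n-2k+j\ge m$ and the adjacency inequality $p\ge m+2q-\min\{q,j\}$, that is over \eqref{eq:degree-condition}, tallies every neighbour of $D$ together with $D$ itself; the self-count is exactly the $(j,m)=(k,0)$ term, which equals $1$ and is always included because $p\ge q$. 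Subtracting this single term yields the stated formula.
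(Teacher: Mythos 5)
Your proposal is correct and follows essentially the same route as the paper: the paper isolates your adjacency characterization as a separate lemma (the gap function $\rho_q(D,F)=m+2q-\min\{q,j\}$ with the same witness-face construction), derives completeness from its maximum value $n-k+q$ attained at opposing faces, and counts neighbours with fixed $(j,m)$ by exactly your product of binomials, subtracting the $(j,m)=(k,0)$ self-term. The only cosmetic difference is that you maximize the gap directly over feasible $(j,m)$ where the paper argues via regularity and the range of $\rho_q$; the content is the same.
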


\begin{proof}
  The symmetry group $B_{n}$ acts on the $n$-cube as automorphisms of
  the face lattice.  The group action is transitive on $k$-faces for
  any $k$ and respects meet and join.  Therefore $B_{n}$ is a subgroup
  of the automorphisms of and acts transitively on the graph~$Q(n,k,p,q)$.

  The characterization of completeness rests on
  Lemma~\ref{lemma:distance-adjacent}.  Using the gap function
  $\rho_q$ defined there, it is shown that $\rho_q(D,F) \le p$ is
  equivalent to the adjacency of $D$ and $F$ in $Q(n,k,p,q)$ and that
  if $F'$ is a face with smaller gap, then $F'$ is adjacent to~$D$.
  Since $Q(n,k,p,q)$ is regular, it is complete if and only if some
  vertex is adjacent to all others. For that to happen, the vertex
  must be adjacent to one which has the largest gap to it. As shown in
  the lemma, the maximum of $\rho_q$ is $n-k+q$ and hence completeness
  is equivalent to $n-k+q \le p$.

  The exact degree also follows from
  Lemma~\ref{lemma:distance-adjacent}.  Fix any vertex $D$ of
  $Q(n,k,p,q)$.  By regularity it suffices to count the adjacent
  vertices $F$ of~$D$.  We subdivide vertices $F$ according to two
  parameters: $m = |(K_D \oplus K_F) \setminus I_D I_F|$ is a
  disagreement between $D$ and $F$ and $j = |I_D \cap I_F|$ is the
  number of common dimensions of $D$ and~$F$.  A priori, $m$ ranges in
  $[n-k]$ and $j$ ranges in $[k]$, but not all combinations allow $F$
  to be a $k$-face adjacent to~$D$.  First, we determine the pairs
  $(m,j)$ for which an adjacent $k$-face exists and then count how
  many of them exist for fixed parameters. Let
  $(m,j) \in [n-k] \times [k]$.  For $j = |I_{D}\cap I_{F}|$ it must
  hold that $n \ge 2k - j$, since $D$ and $F$ are $k$-faces.  Assuming
  this, $F$ can be constructed if and only if the $k-j$ dimensions in
  $I_{F}\setminus I_{D}$ leave enough space to create the prescribed
  disagreement of size~$m$.  As an inequality this is
  $n-k \ge m + (k-j)$, or $n-2k+j \ge m$.  Together with $m \ge 0$,
  this inequality already entails the condition $n \ge 2k - j$ imposed
  by the choice of~$j$.  Thus it is sufficient to require
  $n-2k+j \ge m$, which is the first condition
  in~\eqref{eq:degree-condition}.  Given a $k$-face $F$ with
  parameters $m$ and $j$, the existence of an edge between $D$ and $F$
  in $Q(n,k,p,q)$ imposes the condition
  Lemma~\ref{lemma:distance-adjacent}~(\ref{distadj-p-edge}), which is
  the right half of~\eqref{eq:degree-condition}.

  As for the counting, let $D$ be a fixed $k$-face and let
  $(m,j) \in [n-k] \times [k]$ satisfy~\eqref{eq:degree-condition}.
  We count the $k$-faces $F$ with parameters $m$ and~$j$.  There are
  $\binom{k}{j}$ ways to place the $\w*$s for $I_{F} \cap I_{D}$.  On
  $I_{D} \setminus I_{F}$, there are $2^{k-j}$ independent choices
  from $\{\w0,\w1\}$.  The choices so far fix $F$ in~$I_D$.  There are
  now $\binom{n-k}{k-j}$ choices for the remaining $\w*$s in
  $I_{F}\setminus I_{D}$.  Then $I_F$ is fixed.  Now to finish $F$, we
  may only place $\w0$s and $\w1$s in $[n] \setminus I_DI_F$ where $D$
  has only $\w0$s and $\w1$s as well. Among the remaining $n-2k-j$
  positions, a set of size $m$ must be chosen, where $F$ is already
  determined by the condition that it differs from~$D$.  On the
  remaining $n-2k-j-m$ positions, $F$ is determined by not differing
  from~$D$.  The feasibility of all the choices enumerated so far is
  guaranteed by~\eqref{eq:degree-condition}.  The tally is
\begin{gather*}
  \sum_{m,j \; \text{\eqref{eq:degree-condition}}}
    \binom{k}{j}2^{k-j}\binom{n-k}{k-j}\binom{n-2k+j}{m}.
\end{gather*}
Since $D$ is not adjacent to itself, which is uniquely described by
the feasible parameters $j = k$ and $m = 0$, subtracting $1$ concludes
the proof.
\end{proof}

\begin{lemma} \label{lemma:distance-adjacent} Let $D$, $F$ be
  $k$-faces and $\rho_q(D, F) := m + 2q -\min\{q,j\}$, with
  $j = |I_D \cap I_F|$ and $m = |(K_D \oplus K_F) \setminus I_D
  I_F|$. The following hold:
\begin{enumerate}[(1)]
\item \label{distadj-p-edge} $\rho_q(D, F) \le p$ if and only if $D$
  and $F$ are adjacent in $Q(n,k,p,q)$,
\item \label{distadj-range}
the range of $\rho_q$ is $[q, n-k+q]$,
\item \label{distadj-isotone}
$\rho_q$ is strictly isotone with respect to $q$, i.e.\ $\rho_q < \rho_{q+1}$,
\item \label{distadj-heredit}
for $D, D', F \in \F_k^n$ with $\rho_q(D,D') \le \rho_q(D,F)$,
if $D$ and $F$ are adjacent in $Q(n,k,p,q)$, then so are $D$ and~$D'$.
\end{enumerate}
\end{lemma}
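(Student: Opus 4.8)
\emph{The plan.} All four claims hinge on the geometry of face intersections, so I would begin by recording the basic fact read off from the set notation: for faces $X=(I_X|K_X)$ and $Y=(I_Y|K_Y)$ the intersection $X\cap Y$ is nonempty precisely when $X$ and $Y$ agree on every coordinate fixed in both (i.e.\ on $\compl{I_XI_Y}$), and in that case it is the face with free coordinates $I_X\cap I_Y$, so that $\dim(X\cap Y)=|I_X\cap I_Y|$. The coordinates obstructing a common refinement of $D$ and $F$ are then exactly the $m$ \emph{disagreement} coordinates counted by $m=|(K_D\oplus K_F)\setminus I_DI_F|$, all of which lie in $\compl{I_DI_F}$.

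The heart of the lemma is \eqref{distadj-p-edge}, which I would prove by identifying $\rho_q(D,F)$ as the minimal dimension of a face $S$ meeting both $D$ and $F$ in dimension at least $q$. Adjacency in $Q(n,k,p,q)$ then amounts to this minimum being $\le p$: a smaller witness can always be enlarged to a genuine $p$-face by freeing further coordinates, which never shrinks an intersection, as long as $p\le n$, and indeed $\rho_q\le n-k+q\le n$ by \eqref{distadj-range}. For the lower bound I would argue that any witness $S$ must free all $m$ disagreement coordinates, since otherwise it cannot meet both $D$ and $F$; writing $a,b,c$ for the numbers of free coordinates of $S$ lying in $I_D\setminus I_F$, $I_F\setminus I_D$, and $I_D\cap I_F$ respectively, the requirements $a+c\ge q$ and $b+c\ge q$ together with $c\le j$ force $a+b+c\ge\max\{q,\,2q-j\}=2q-\min\{q,j\}$, whence $\dim S=|I_S|\ge m+2q-\min\{q,j\}=\rho_q$. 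For the matching construction I would free exactly the $m$ disagreement coordinates together with $\min\{q,j\}$ coordinates of $I_D\cap I_F$ and, when $j<q$, an extra $q-j$ coordinates in each of $I_D\setminus I_F$ and $I_F\setminus I_D$ (feasible because $q\le k$), then fix the remaining coordinates to agree with $D$ and $F$ wherever they are defined; this is consistent precisely because the disagreement coordinates have been freed.

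For \eqref{distadj-range} I would optimize $\rho_q=m+2q-\min\{q,j\}$ over the feasible ranges $0\le m\le n-2k+j$ and $0\le j\le k$: the minimum value $q$ occurs at $m=0$ and $j\ge q$ (e.g.\ $D=F$), while the maximum is attained at $j=k$, $m=n-k$, giving $n-k+q$; already taking $I_D=I_F$ and letting $m$ run through $0,\dots,n-k$ realizes every intermediate integer, so the range is the full interval $[q,n-k+q]$. Claim \eqref{distadj-isotone} is then a one-line computation: since $m$ and $j$ depend only on $D,F$, we get $\rho_{q+1}-\rho_q=2-(\min\{q+1,j\}-\min\{q,j\})$, and the parenthesised difference is $0$ for $j\le q$ and $1$ for $j>q$, so the gap is $1$ or $2$, hence positive. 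Finally \eqref{distadj-heredit} is immediate from \eqref{distadj-p-edge}: adjacency of $D$ and $F$ gives $\rho_q(D,F)\le p$, so the hypothesis $\rho_q(D,D')\le\rho_q(D,F)$ yields $\rho_q(D,D')\le p$ and therefore adjacency of $D$ and $D'$.

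The main obstacle is the lower bound in \eqref{distadj-p-edge}: obtaining the exact constant $2q-\min\{q,j\}$ requires the small combinatorial optimization over the overlap counts $a,b,c$ rather than the naive estimate $2q-j$, which is only tight when $j\le q$. I would also handle the degenerate diagonal $D=F$ with care, where $\rho_q=q\le p$ holds automatically; this case corresponds to the parameters $j=k$, $m=0$ of the excluded self-loop and is exactly the $-1$ correction later subtracted in the degree formula of Theorem~\ref{theorem:Qnkpq}.
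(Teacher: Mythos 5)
Your proof is correct and takes essentially the same route as the paper: you characterize $\rho_q(D,F)$ as the minimal dimension of a face meeting both $D$ and $F$ in dimension at least $q$ (lower bound by counting the free coordinates any witness must carry, upper bound by explicit construction and enlargement to a $p$-face), and then deduce claims (2)--(4) exactly as the paper does. Your optimization over the overlap counts $a,b,c$ merely makes explicit the step the paper summarizes as ``one needs at least $2q-\min\{q,j\}$ further free coordinates,'' so this is a refinement of detail rather than a different argument.
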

\begin{proof}
  Given two $k$-faces $D$ and $F$, the ground set $[n]$ splits into
  three sets:
  \begin{inparaenum}[(i)]
  \item $(K_D \oplus K_F) \setminus I_D I_F$ of cardinality $m$ where
    both have $\w{0}$ and $\w{1}$ symbols only but differ,
  \item $I_D \cap I_F$ of cardinality $j$ of shared $\w{*}$ symbols, and
  \item \label{distadj-other} everything else, i.e.\ positions where
    $\w{0}$ and $\w{1}$ patterns agree or where $\w{0}$ and $\w{1}$
    are in one face and $\w{*}$ in the other.
  \end{inparaenum}
  In order to connect two $k$-faces in $Q(n,k,p,q)$, there needs to be
  a $p$-face which intersects either of them in at least
  dimension~$q$.  Such a face has to cover the set of size $m$ with
  $\w{*}$s, as otherwise it will not intersect both faces.
  Conversely, once $m$ is covered, a $0$-dimensional intersection with
  both faces is ensured by placing $\w0$s and $\w1$s appropriately.
  To achieve a $q$-dimensional intersection, $q$ $\w{*}$s have to be
  placed on $I_D$ and $I_F$ each.  By using the $j$ shared $\w{*}$s,
  one needs at least $2q - \min\{q,j\}$ further $\w{*}$s to construct
  a connecting $p$-face.  Thus $\rho_q(D,F)$ is the minimum dimension
  $p$ necessary to connect $D$ and $F$ in $Q(n,k,p,q)$.  This proves
  claim~(\ref{distadj-p-edge}).

  It is clear that $\rho_q$ is minimal when $m$ is minimal and $j$ is
  maximal.  This can be achieved simultaneously by choosing $F = D$,
  in which case $\rho_q(D,D) = q$.  Now consider the opposing face
  $D^\circ = (I_D, [n] \setminus K_D I_D)$ of~$D$.  The gap is
  $\rho(D, D^\circ) = n - |I_D| + 2q - \min\{q, |I_D|\} = n - k + q$
  assuming $D$ is a vertex of $Q(n,k,p,q)$ where in particular
  $|I_D| = k \ge q$.  Increasing this value would require reducing $j$
  since $m$ is already maximal.  Un-sharing $\w{*}$s with $D$ consumes
  positions inside the block of $\w{0}$s and $\w{1}$s in $d$ of size
  $n-k$ which reduces $m$ by an equal amount.  Hence $n-k+q$ is
  maximal.  Furthermore, by varying $m$ but keeping $j = k$, all
  values in the range $[q, n-k+q]$ can be attained, proving
  claim~(\ref{distadj-range}).

  Claim~(\ref{distadj-isotone}) follows from a straightforward calculation:
  \begin{align*}
    \rho_{q+1}(D,F) - \rho_q(D,F)
    &= 2 - (\min\{q+1,j\} - \min\{q,j\}) \\
    &= \begin{cases}
      2, & j \le q, \\
      1, & j \ge q+1.
    \end{cases}
  \end{align*}

  In the situation of claim~(\ref{distadj-heredit}), since $D$ and $F$
  are adjacent in $Q(n,k,p,q)$, we have
  $\rho_q(D,D') \le \rho_q(D,F) \le p$
  by~(\ref{distadj-p-edge}). Applying~(\ref{distadj-p-edge}) again
  in reverse proves the claim.
\end{proof}

\pagebreak

\begin{corollary} \label{cor:Qn3p2-bounds} $ $
\begin{enumerate}[(1)]
\item $Q(n,3,2,2)$ is complete for $n \le 3$. Otherwise its degree is
  $6(n-3) \allowbreak \le 6(n-2)$.
\item $Q(n,3,3,2)$ is complete for $n \le 4$. Otherwise its degree is
  $12(n-3)(n-4)+ \allowbreak 7(n-3)\allowbreak \le 12(n-1)(n-2)$.
  \qed
\end{enumerate}
\end{corollary}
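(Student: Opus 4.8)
The plan is to obtain both claims as direct specializations of Theorem~\ref{theorem:Qnkpq}, since the general formula already handles an arbitrary quadruple $(n,k,p,q)$; here we only need $k=3$ together with $(p,q)\in\{(2,2),(3,2)\}$, so no new ideas are required beyond evaluating the stated expressions.

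For completeness I would simply invoke the criterion $n+q\le p+k$ from the theorem. With $k=3$ and $q=2$ this reads $n+2\le p+3$, i.e.\ $n\le p+1$. Hence $Q(n,3,2,2)$ is complete exactly when $n\le 3$ and $Q(n,3,3,2)$ exactly when $n\le 4$, as claimed.

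For the degrees I would evaluate the sum in Theorem~\ref{theorem:Qnkpq}. The decisive step, and the only place requiring any thought, is identifying which pairs $(m,j)$ survive the connectivity half of~\eqref{eq:degree-condition}. With $q=2$ that condition becomes $p\ge m+4-\min\{2,j\}$, i.e.\ $m\le\min\{2,j\}-(4-p)$. For $p=2$ this forces $j\ge 2$ and $m=0$, leaving only the pairs $(m,j)=(0,2)$ and $(0,3)$; for $p=3$ it admits $(0,1)$, $(0,2)$, $(1,2)$, $(0,3)$ and $(1,3)$. Everything else then reduces to evaluating the summand $\binom{3}{j}2^{3-j}\binom{n-3}{3-j}\binom{n-6+j}{m}$ on these few pairs, the feasibility half $n-6+j\ge m$ being automatic once $n$ exceeds the respective completeness threshold.

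Carrying out the small arithmetic finishes the proof. In the first case the surviving summands are $6(n-3)$ and $1$, so after subtracting $1$ we get $\deg Q(n,3,2,2)=6(n-3)$. In the second case the five summands are $6(n-3)(n-4)$, $6(n-3)$, $6(n-3)(n-4)$, $1$ and $(n-3)$, whose total minus $1$ is $12(n-3)(n-4)+7(n-3)$. The two stated upper bounds are then elementary: $6(n-3)\le 6(n-2)$ is immediate, and $12(n-3)(n-4)+7(n-3)\le 12(n-1)(n-2)$ reduces after expansion to $41n\ge 99$, which holds for all $n\ge 3$. I do not anticipate any genuine obstacle here; the entire content is the correct enumeration of the feasible pairs $(m,j)$, after which the computation is mechanical.
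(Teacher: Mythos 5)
Your proposal is correct and follows exactly the route the paper intends: the corollary is stated with a \qed precisely because it is the specialization $k=3$, $(p,q)\in\{(2,2),(3,2)\}$ of Theorem~\ref{theorem:Qnkpq}, and your enumeration of the feasible pairs $(m,j)$, the resulting summands, and the final inequalities all check out.
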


\begin{remark}
  For the theory of gaussoids the cases $k=3, p=2,3, q=2$ are
  relevant.  We consider it an interesting problem to study growth of
  the degree formula for other parameters.  Certainly the graph can be
  complete, where the degree is as large as $\binom{n}{k}2^{n-k}$.  To
  construct large independent sets, one wants smaller degrees.  It is
  proved below that a maximal independent set in $Q(n,3,3,2)$ has
  cardinality in $\Theta(n2^n)$ of which one inequality follows from
  the degree formula.
\end{remark}

\begin{proposition} \label{prop:independent-gaussoids} Let $\F$ be
  an independent set in $Q(n,k,3,2)$, then the following inequality
  holds: $|\GG_n| \ge |\GG_k|^{|\F|}$.
\end{proposition}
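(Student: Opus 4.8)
The plan is to turn every assignment of a $k$-gaussoid to each face of $\F$ into a distinct $n$-gaussoid by embedding. For each $D \in \F$ choose an arbitrary $\G_D \in \GG_k$; there are $|\GG_k|^{|\F|}$ such families. I would then form
\[
  \G := \bigcup_{D \in \F} \G_D \nearrow D \subseteq \A_n.
\]
The first point to record is that independence in $Q(n,k,3,2)$ makes the faces of $\F$ mutually square-disjoint: were two distinct $D, D' \in \F$ to share a square $\sigma$, any $3$-face $S \supseteq \sigma$ (one exists because $n \ge 3$) would give $\dim(D \cap S) \ge 2$ and $\dim(D' \cap S) \ge 2$ and hence an edge. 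So $\dim(D \cap D') \le 1$ and the embedded square sets are pairwise disjoint.

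The core step is to show $\G \in \GG_n$. By Lemma~\ref{lemma:puzzling} it suffices to check that $\G \searrow C$ is a $3$-gaussoid for every cube $C \in \C_n$. Fix $C$. Applying independence a second time, now with the witness $S = C$ itself, shows that at most one $D \in \F$ can have $\dim(C \cap D) \ge 2$. Since a square of $\G$ lying in $C$ must come from an embedding $\G_D \nearrow D$ with $\dim(C \cap D) \ge 2$, either no such $D$ exists and $\G \cap C = \emptyset$, giving $\G \searrow C = \emptyset \in \GG_3$; or there is a unique such $D$ and $\G \cap C = (\G_D \nearrow D) \cap C$. In the latter situation the Fact applies to the cube $C$ and the $k$-face $D$: if $C \subseteq D$ then $\G \searrow C = \G_D \searrow \pi_D(C)$ is a $3$-minor of the $k$-gaussoid $\G_D$, hence a $3$-gaussoid by minor-closedness; otherwise $C \cap D$ is a single square and $\G \cap C$ contains at most that one square, so $\G \searrow C$ is trivially a gaussoid. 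In all cases $\G \searrow C \in \GG_3$, so $\G \in \GG_n$.

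It remains to read off the count. Because the faces of $\F$ are square-disjoint, only the embedding into $D$ contributes squares inside $D$, so $\G \cap D = \G_D \nearrow D$ and therefore $\G \searrow D = \pi_D(\G_D \nearrow D) = \G_D$ for each $D \in \F$. Thus the family $(\G_D)_{D \in \F}$ is recovered from $\G$ as its tuple of $D$-minors; distinct families yield distinct $n$-gaussoids, and $|\GG_n| \ge |\GG_k|^{|\F|}$ follows. I expect the main obstacle to be the cube-by-cube analysis in the core step: the two applications of independence (square-disjointness of the faces and the at-most-one-face-per-cube property) combined with the $k = 3$ case of the Fact are precisely what force each $\G \searrow C$ to be either an honest minor of one $\G_D$ or a lone square, and making this case split exhaustive and correctly aligned with the minor operations is where the work sits.
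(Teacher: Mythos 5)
Your proposal is correct and follows essentially the same route as the paper: build $\G$ as the disjoint union of embeddings $\G_D \nearrow D$ over the independent set, verify via Lemma~\ref{lemma:puzzling} that every $3$-minor $\G \searrow C$ is either a genuine $3$-minor of some $\G_D$ (when $C \subseteq D$), a singleton, or empty, and recover the family $(\G_D)_D$ from the $D$-minors of $\G$ to get injectivity. The only cosmetic difference is that you make the two uses of independence (pairwise square-disjointness of the faces, and the at-most-one-face-per-cube property witnessed by $S = C$) and the appeal to the Fact from Section~\ref{sec:hypercube-minors} fully explicit where the paper states them more tersely.
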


\begin{proof}
  Let $D, F \in \F$. Since $\F$ is independent, there is no $3$-cube
  sharing a square with $D$ and with~$F$.  Since $k \ge 3$, also $D$
  and $F$ share no square.  Thus an assignment of $k$-gaussoids
  $\alpha: \F \to \GG_k$ lifts to a well-defined set of squares
  $\G := \bigsqcup_{D \in \F} \alpha D \nearrow D \subseteq \A_n$.
  The map $\alpha \mapsto \G$ is injective.

  To see that $\G$ is a gaussoid, we examine its $3$-minors. Let
  $C \in \C_n$ be arbitrary. In case $C$ is fully contained in some
  $D \in \F$, then clearly
  $\G \searrow C = (\alpha D \nearrow D) \searrow C = \alpha D \searrow
  C \in \GG_3$ since $\alpha D \in \GG_k$.  Otherwise $C$ can share at
  most one square with any face in~$\F$.  If~it shares no square with
  any element of~$\F$, then $\G \searrow C$ is empty, hence a
  gaussoid. Otherwise it shares a square with some face in $\F$ and thus
  cannot share a square with any other element of $\F$ because $\F$ is an
  independent set in $Q(n,k,3,2)$. In this case, $\G \searrow C$ is a
  singleton or empty and hence a gaussoid.
\end{proof}

\begin{proposition} \label{prop:independent-non-gaussoids} Let $\F$ be
  an independent set in $Q(n,k,2,2)$ and $c$ the maximum size of a set
  of mutually range-disjoint injections of $\GG_k$ into
  $2^{\A_k} \setminus \GG_k$.  Then
  $\frac{2^{|\A_n|}}{|\GG_n|} \ge c^{|\F|}$.
\end{proposition}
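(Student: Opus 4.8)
The plan is to build the bound as the mirror image of Proposition~\ref{prop:independent-gaussoids}. There an independent set in $Q(n,k,3,2)$ let us \emph{freely place} $k$-gaussoids to produce many gaussoids; here the sparser graph $Q(n,k,2,2)$ will let us \emph{freely corrupt} the local pieces of a gaussoid into non-gaussoids, and I will convert this into the stated ratio by exhibiting a single injection
\[
  \Psi\colon \GG_n \times \{1,\dots,c\}^{\F} \longrightarrow 2^{\A_n}.
\]
The first step is to read off what independence in $Q(n,k,2,2)$ provides. With $p=q=2$ a connecting $2$-face $S$ can meet a $k$-face in dimension $\ge 2$ only if $S$ is contained in it, so adjacency of $D,F$ means that $D$ and $F$ contain a common square; equivalently $\rho_2(D,F)\le 2$, i.e.\ $\dim(D\cap F)\ge 2$, by Lemma~\ref{lemma:distance-adjacent}~(\ref{distadj-p-edge}). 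Hence the faces of an independent $\F$ are pairwise \emph{square-disjoint}, and the squares contained in the various $D\in\F$ together with the squares lying in no $D\in\F$ form a partition of $\A_n$.

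Next I would fix, using the definition of $c$, mutually range-disjoint injections $\iota_1,\dots,\iota_c\colon\GG_k\to 2^{\A_k}\setminus\GG_k$ and define
\[
  \Psi(\G,s) = \left(\G \setminus \bigcup_{D\in\F}(\A_n\cap D)\right) \cup \bigcup_{D\in\F}\left(\iota_{s(D)}(\G\searrow D)\nearrow D\right),
\]
where $\A_n\cap D$ denotes the squares contained in $D$. This is well defined because each $D\in\F$ is a $k$-face, so $\G\searrow D$ is a $k$-minor and hence a $k$-gaussoid by minor-closedness (Lemma~\ref{lemma:puzzling}), on which $\iota_{s(D)}$ acts; the embedding $\nearrow D$ returns squares contained in $D$. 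Informally, $\Psi(\G,s)$ leaves $\G$ untouched away from $\F$ and replaces the gaussoid seen inside each $D\in\F$ by a non-gaussoid.

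The crux is the injectivity of $\Psi$, and this is precisely where square-disjointness is used. Given the image $H=\Psi(\G,s)$ and any $D\in\F$, the squares of $H$ contained in $D$ come only from the $D$-summand: the untouched part contributes no square inside any face of $\F$, and a square inside some $D'\ne D$ cannot also lie in $D$ since $D$ and $D'$ are square-disjoint. Therefore $H\searrow D = \iota_{s(D)}(\G\searrow D)$. As the images of the $\iota_i$ are pairwise disjoint, reading $H\searrow D$ recovers the index $s(D)$; injectivity of $\iota_{s(D)}$ then recovers the minor $\G\searrow D$, hence the squares of $\G$ inside $D$. Simultaneously the squares of $H$ lying in no face of $\F$ coincide with those of $\G$. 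Since these data range over the partition of $\A_n$ identified above, they determine $\G$ and $s$, so $\Psi$ is injective.

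Comparing cardinalities then yields $|\GG_n|\cdot c^{|\F|} = \left|\GG_n\times\{1,\dots,c\}^{\F}\right| \le 2^{|\A_n|}$, which rearranges to the claim. I expect the only genuine subtlety to be the verification that $H\searrow D$ equals the corrupted local piece with no interference from the other summands, that is, the reduction of the $Q(n,k,2,2)$-hypothesis to square-disjointness; the remaining steps are bookkeeping against the partition of $\A_n$.
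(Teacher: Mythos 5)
Your argument is essentially the paper's: independence in $Q(n,k,2,2)$ makes the faces of $\F$ pairwise square-disjoint, range-disjoint injections corrupt the local $k$-minors, and range-disjointness plus injectivity let you read $(\G,s)$ back off the image, giving $|\GG_n|\, c^{|\F|} \le 2^{|\A_n|}$. The one point where you diverge is worth flagging: the paper defines the perturbed set as $H_{\alpha'} = \bigsqcup_{D \in \F} f_{\alpha' D}(\G \searrow D) \nearrow D$ and nothing more, so it discards the squares of $\G$ lying outside every face of $\F$; as written, two gaussoids with identical $D$-minors for all $D \in \F$ but differing elsewhere would then have the same image, and injectivity in the $\G$-argument is not actually established. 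Your $\Psi$ explicitly carries along $\G \setminus \bigcup_{D \in \F} (\A_n \cap D)$, which is precisely the missing data needed to recover $\G$ in full, so your write-up is not a different route but a more careful execution of the same one that repairs this small gap.
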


\begin{proof}
  The proof is analogous to
  Proposition~\ref{prop:independent-gaussoids} but uses the
  independent set to perturb any gaussoid injectively into $c^{|\F|}$
  non-gaussoids. Again, since $q=2$ and $\F$ is independent, an
  assignment $\alpha: \F \to 2^{\A_k}$ lifts uniquely via $\nearrow$
  to a subset of $\A_n$. Let $\{f_i\}_{i \in [c]}$ be a set of
  range-disjoint injections as in the claim. Consider the maps
  $\alpha': \F \to [c]$.  To each $\G \in \GG_n$ associate
  $H_{\alpha'} := \bigsqcup_{D \in F} f_{\alpha' D}(\G \searrow D)
  \nearrow D \subseteq \A_n$.

  Because the ranges of the $f_i$ are disjoint, the map
  $(\G, \alpha') \mapsto H_{\alpha'}$ is injective. None of the sets
  $H_{\alpha'}$ is a gaussoid since any $D \in \F$ certifies
  $H_{\alpha'} \searrow D = f_{\alpha' D}(\G \searrow D) \not\in
  \GG_k$.
\end{proof}

\begin{remark} \label{rem:independent-set-technique} The proofs of
  Propositions~\ref{prop:independent-gaussoids}
  and~\ref{prop:independent-non-gaussoids} exploit two properties of
  the class of gaussoids:
\begin{inparaenum}[(1)]
\item \label{def:independent-set-technique-puzzling}
it has a puzzle property, and
\item \label{def:independent-set-technique-trivials} the empty set and
  all singletons are in its basis.
\end{inparaenum}
The same technique does not work for realizable gaussoids because they
lack property~(\ref{def:independent-set-technique-puzzling}) and not
for graphical gaussoids (see Section~\ref{s:specialGaussoids}) because
they lack property~(\ref{def:independent-set-technique-trivials}).
Indeed their numbers can be shown to be single exponential.  For
realizable gaussoids, this follows from Nelson's recent breakthrough:
If a gaussoid is realizable with a positive-definite $n\times n$
covariance matrix $\Sigma$, then the $n\times 2n$ matrix
$(I_{n}\, \Sigma)$ both defines a vector matroid and identifies
the gaussoid.  By \cite[Theorem~1.1]{Nelson16} there are only
exponentially many realizable matroids and thus realizable gaussoids.
Nelson's bound features a cubic polynomial in the exponent, while
there are certainly $2^{\binom{n}{2}}$ realizable gaussoids coming from
graphical models.
\end{remark}

To get explicit bounds we apply the propositions for $k=3$.  To find
suitable independent sets in $Q(n,3,3,2)$ and $Q(n,3,2,2)$ we use
Brooks' Theorem~\cite{LovaszBrooks} and the degree bounds from
Corollary~\ref{cor:Qn3p2-bounds}.  Since the graphs are connected,
have degree at least $3$ but are not complete, there exists a proper
$\deg Q(n,3,3,2)$-coloring of $Q(n,3,3,2)$, and we can pick a color
class as an independent set~$\F$.  Its size is at least that of an
average color class:
\begin{align*}
  \frac{|\F_3^n|}{\deg Q(n,3,3,2)}
  \ge \frac{n(n-1)(n-2)}{6 \cdot 12(n-1)(n-2)} 2^{n-3}
  = \frac{n}{6^2} 2^{n-4}
  = \frac{n}{9} 2^{n-6}.
\end{align*}
For $Q(n,3,2,2)$, we find analogously
\begin{align*}
  \frac{|\F_3^n|}{\deg Q(n,3,2,2)}
  \ge \frac{n(n-1)(n-2)}{6 \cdot 6(n-2)} 2^{n-3}
  = \frac{n(n-1)}{6^2} 2^{n-3}
  = \frac{n(n-1)}{9} 2^{n-5}.
\end{align*}
Proposition~\ref{prop:independent-gaussoids} now shows, using
$|\GG_3| = 11$ and $\log_2 11 \ge 3$, that there are at least
$11^{\frac{n}{9} 2^{n-6}} \ge 2^{\frac{n}{3} 2^{n-6}}$ $n$-gaussoids.
Similarly, Proposition~\ref{prop:independent-non-gaussoids} with
$c = \floor{\frac{64-11}{11}} = 4$ gives an upper bound on the ratio
of $n$-gaussoids of
$4^{\frac{n(n-1)}{9} 2^{n-5}} = 2^{\frac{n(n-1)}{9} 2^{n-4}}$.  We
have proved
\begin{theorem} \label{theorem:bounds} For $n \ge 5$, the number of
  $n$-gaussoids is bounded by
\[
  2^{\frac{1}{3} n 2^{n-6}} \le |\GG_n| \le \frac{2^{|\A_n|}}{2^{\frac{4}{9} n(n-1) 2^{n-6}}}.\]
\end{theorem}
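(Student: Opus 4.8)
The plan is to assemble both bounds from the two mechanisms already established, Proposition~\ref{prop:independent-gaussoids} for the lower bound and Proposition~\ref{prop:independent-non-gaussoids} for the upper bound, by supplying each with a large independent set in the relevant graph $Q(n,3,p,2)$. Each proposition converts the size $|\F|$ of such a set into an exponent, so the only remaining work is to guarantee an independent set whose cardinality is a fixed fraction of the total number of vertices $|\F_3^n| = \binom{n}{3}2^{n-3}$. The standard device is a proper vertex coloring: a coloring with $r$ colors partitions $\F_3^n$ into $r$ independent sets, so the largest color class has size at least $|\F_3^n|/r$, and Brooks' theorem bounds $r$ by the maximum degree.

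For the lower bound I would apply Brooks' theorem to $Q(n,3,3,2)$. First I would verify its hypotheses: it is vertex-transitive by Theorem~\ref{theorem:Qnkpq} and connected, its degree is at least $3$, and, for $n \ge 5$, it is \emph{not} complete, since Corollary~\ref{cor:Qn3p2-bounds}(2) says completeness holds only when $n \le 4$. Brooks' theorem then gives a proper coloring with $\deg Q(n,3,3,2)$ colors; averaging produces an independent set $\F$ with $|\F| \ge |\F_3^n|/\deg Q(n,3,3,2)$. Inserting the degree bound $\deg Q(n,3,3,2) \le 12(n-1)(n-2)$ from the corollary and simplifying gives $|\F| \ge \tfrac{n}{9}2^{n-6}$. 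Proposition~\ref{prop:independent-gaussoids} with $k=3$ and $|\GG_3| = 11$ then yields $|\GG_n| \ge 11^{\frac{n}{9}2^{n-6}}$, and the bound $\log_2 11 \ge 3$ turns this into $2^{\frac13 n 2^{n-6}}$.

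The upper bound is symmetric, using $Q(n,3,2,2)$ instead. By Corollary~\ref{cor:Qn3p2-bounds}(1) this graph is complete only for $n \le 3$ and has degree at most $6(n-2)$, so the same Brooks-and-averaging argument delivers an independent set of size at least $\tfrac{n(n-1)}{9}2^{n-5}$. The extra ingredient is the constant $c$ of Proposition~\ref{prop:independent-non-gaussoids}: here $2^{|\A_3|} = 64$ and there are $11$ three-gaussoids, leaving $53$ non-gaussoid subsets of $\A_3$, into which one can pack $c = \floor{53/11} = 4$ range-disjoint copies of the $11$-element set $\GG_3$. The proposition then gives $\tfrac{2^{|\A_n|}}{|\GG_n|} \ge 4^{\frac{n(n-1)}{9}2^{n-5}}$; rewriting the exponent on the common scale $2^{n-6}$ turns this into $2^{\frac49 n(n-1)2^{n-6}}$ and hence into the asserted upper bound on $|\GG_n|$.

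I do not anticipate a real obstacle, as all the heavy lifting sits in the two propositions and in the degree formula of Theorem~\ref{theorem:Qnkpq} and Corollary~\ref{cor:Qn3p2-bounds}. The single point requiring genuine care is the verification of the Brooks' theorem hypotheses over the precise range $n \ge 5$: both non-completeness thresholds ($n \le 4$ and $n \le 3$) must be cleared at once, which is exactly why the statement begins at $n=5$, and connectivity together with the degree-at-least-$3$ condition must also be confirmed. The remaining risk is purely bookkeeping---matching the various factors $2^{n-5}$ and $2^{n-4}$ to the scale $2^{n-6}$ and checking the estimate $\log_2 11 \ge 3$---none of which should cause difficulty.
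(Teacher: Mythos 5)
Your proposal is correct and follows essentially the same route as the paper: Brooks' theorem applied to $Q(n,3,3,2)$ and $Q(n,3,2,2)$ with the degree bounds of Corollary~\ref{cor:Qn3p2-bounds}, feeding the resulting independent sets into Propositions~\ref{prop:independent-gaussoids} and~\ref{prop:independent-non-gaussoids} with $|\GG_3|=11$ and $c=4$. All the arithmetic (the sizes $\frac{n}{9}2^{n-6}$ and $\frac{n(n-1)}{9}2^{n-5}$, and the rescaling to the exponent base $2^{n-6}$) matches the paper's computation.
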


\begin{remark} \label{remark:nelson}
  A simple way to obtain a weaker double exponential lower bound for
  the number of gaussoids was suggested to us by Peter Nelson,
  following a matroid construction of Piff and Welsh~\cite{PiffWelsh71}.
  Let $\R_k$ be the set of all $r$-subsets $S$ of $[n]$ for some~$r<n$
  such that $\sum_{i\in S} i \equiv k \Mod{n}$, for $0 \le k < n$.
  We view $S\in \R_k$ as a $2$-face $(ij|S \setminus ij)$ of the
  $n$-cube, where $i,j$ are the minimal elements of~$S$.
  In this way any subset of $\R_k$ defines a set of $2$-faces which
  is vacuously a gaussoid: by construction, no gaussoid axiom has
  both of its premises in~$\R_k$.
  Certainly $k$ can be chosen so that $|\R_k| \ge \frac{1}{n}\binom{n}{r}$
  and with $r=\floor{n/2}$ this gives at least $2^{\frac{1}{n}\binom{n}{r}}
  \in 2^{\Theta(n^{-3/2}2^{n})}$ gaussoids.
\end{remark}

Substituting $|\A_n| = \binom{n}{2} 2^{n-2}$ in
Theorem~\ref{theorem:bounds} gives an interval for the absolute number
of $n$-gaussoids for $n \ge 5$. It shows
$\log |\GG_n| \in \Omega(n 2^n) \cap \O(n^2 2^n)$.

We conclude this section by showing that the $n2^n$ order lower bound
is the best that the independent set construction in $Q(n,3,3,2)$ can
do.  The \emph{independence number} $\alpha (G)$ of a graph $G$ is the
maximal size of an independent set in~$G$.  Similarly, the
\emph{clique~number} $\omega (G)$ is the maximal size of a clique
in~$G$.  Since $Q(n,3,3,2)$ is transitive, the following inequality
holds~\cite[Lemma~7.2.2]{GodsilRoyle}:
\[
  \alpha (Q(n,3,3,2)) \le \frac{|\F_3^n|}{\omega (Q(n,3,3,2))}.
\]
Since $|\F_3^n| \in \Theta(n^3 2^n)$, it suffices to find a clique of
size $\Omega(n^2)$ in every $Q(n,3,3,2)$. Take the set of cubes
$\J := \{(1ij|) : ij \in \binom{[n]\setminus 1}{2}\}$.  This set has
cardinality $\binom{n-1}{2} \in \Theta(n^2)$ and any two elements
$D = (1ij|)$, $F = (1kl|)$ in it are connected by an edge in
$Q(n,3,3,2)$, since
$\rho_2(D, F) = m + 2\cdot 2 - \min\{2, j\} = 4 - \min\{2, j\} \le 3$
with $m = 0$ and $j \ge 1$.

\section{Special gaussoids}
\label{s:specialGaussoids}

Because of their puzzle property, gaussoids are the largest class
of CI structures whose $k$-minors are $k$-gaussoids.  The base case
of this definition are the eleven $3$-gaussoids arising from $3 \times 3$
covariance matrices of Gaussian distributions.  The $3$-gaussoids split
into five symmetry classes modulo $\S_3$ which we denote by letters
\texttt{E}, \texttt{L}, \texttt{U}, \texttt{B}, and \texttt{F}.
They are depicted in Figure~\ref{fig:3-gaussoid-symmetry}.

\begin{figure}
\input{3-gaussoid-symmetry.fig}
\label{fig:3-gaussoid-symmetry}
\end{figure}

The special $\S_n$-invariant types of gaussoids in this section arise
from choosing subsets of these five symmetry classes to base a
puzzle property on.  Each of the 32 sets of bases can be
converted into axioms in the $3$-cube similar to the gaussoid axioms
\eqref{ax:G1}---\eqref{ax:G4}.  SAT solvers~\cite{sharpSAT,TodaSAT, miniSAT}
were used on the resulting Boolean formulas to enumerate or count
these classes and \cite{sage} was useful to create input files for
the~solver.  The listings can be found on our supplementary website
\href{https://gaussoids.de}{gaussoids.de}.
For nine classes an entry in the~\cite{OEIS} could be found.
Table~\ref{tab:special-gaussoids} is the main result of this section.
It summarizes the different types of gaussoids that arise from
the different~bases.

\begin{table}
\begin{tabularx}{\textwidth}{cl|l|c|l}
& Name & Count in dim. $3, 4, 5, \ldots$ & OEIS & Interpretation \\

\hline
\multicolumn{5}{l}{Fast-growing} \\
\hline

& \texttt{ELUBF} & 11, 679, 60\,212\,776 & --- & Gaussoids \\
& \texttt{ELUB}  & 10, 640, 59\,348\,930 & --- & --- \\
& \texttt{ELUF}  &  8, 522, 48\,633\,672 & --- & --- \\
& \texttt{ELU}   &  7, 513, 47\,867\,881 & --- & Required for Prop.~\ref{prop:independent-gaussoids} \\

\hline
\multicolumn{5}{l}{Incompatible} \\
\hline

& \texttt{LUB}      & 9, 111, 0, 0 & ---     & Vanishes for $n \ge 5$ \\
& \texttt{LUF}      & 7,  61, 1, 1 & ---     & Only \texttt{F} for $n \ge 5$ \\
& \texttt{LU}       & 6,  60, 0, 0 & ---     & Vanishes for $n \ge 5$ \\
& \texttt{\{L,U\}B} & 6,  15, 0, 0 & ---     & Vanishes for $n \ge 5$ \\
& \texttt{\{L,U\}F} & 4,   1, 1, 1 & ---     & Only \texttt{F} for $n \ge 4$ \\
& \texttt{EF}       & 2,   2, 2, 2 & \href{https://oeis.org/A007395}{A007395} & Only \texttt{E} or \texttt{F} for all $n$ \\

\hline
\multicolumn{5}{l}{Graphical} \\
\hline

& \texttt{E\{L,U\}BF} & 8, 64, 1\,024, 32\,768, 2\,097\,152 & \href{https://oeis.org/A006125}{A006125} & Undirected simple graphs \\
& \texttt{E\{L,U\}B}  & 7, 41,    388,  5\,789,    133\,501 & \href{https://oeis.org/A213434}{A213434} & Graphs without $3$-cycles \\
& \texttt{\{L,U\}BF}  & 7, 34,    206,  1\,486,     12\,412 & \href{https://oeis.org/A011800}{A011800} & Forests of paths on $[n]$ \\


& \texttt{E\{L,U\}F}, \texttt{EBF} & 5, 15, 52, 203, 877, 4\,140         & \href{https://oeis.org/A000110}{A000110} & Partitions of $[n]$ \\
& \texttt{E\{L,U\}},  \texttt{BF}  & 4, 10, 26,  76, 232,    764, 2\,620 & \href{https://oeis.org/A000085}{A000085} & Involutions on $[n]$ \\
& \texttt{EB}                      & 4,  8, 16,  32,  64,    128,    256 & \href{https://oeis.org/A000079}{A000079} & Subsets of $[n-1]$ \\

\hline
\multicolumn{5}{l}{Exceptional} \\
\hline

& \texttt{LUBF} & \makecell[tl]{10, 142, 1\,166, 12\,796, \\ 183\,772, 3\,221\,660} & --- & --- \\

\end{tabularx}
\caption{%
26 classes of special gaussoids categorized into four types. The remaining six
classes are described by one or zero letters of $\{\texttt{E}, \texttt{L},
\texttt{U}, \texttt{B}, \texttt{F}\}$ and belong to the Incompatible type,
as each of them is a subclass of a class found to be Incompatible.
}
\label{tab:special-gaussoids}
\end{table}


The classes \texttt{E}, \texttt{B} and \texttt{F} are themselves
closed under duality, while \texttt{L} and \texttt{U} are interchanged
by it.
It follows that any one of the 32 classes is invariant under duality if
it contains either none of \texttt{L} and \texttt{U} or both of them.
On the remaining classes, duality acts by swapping \texttt{L} with
\texttt{U}. The combinatorial properties of the classes, e.g.~the
size, are unaffected by this action, hence \texttt{LB} and \texttt{UB}
are conflated to \texttt{\{L,U\}B} in
Table~\ref{tab:special-gaussoids}.

\subsection{Fast-growing gaussoids}

By Remark~\ref{rem:independent-set-technique}, the construction of
doubly exponentially many members of a class of gaussoids requires
that the class has a puzzle property and that its basis includes
\texttt{ELU}. This explains the rapid growth of all four classes of
this type.

\subsection{Incompatible minors}

As a consequence of Definition~\ref{def:puzzle-property}, if there is
no gaussoid of dimension $k$ in a class, there are no gaussoids of any
dimension $\ge k$ in the class. Similarly, if the class contains only
the empty or full gaussoid in dimension~$k$, the members of dimension
$\ge k$ are the empty or full gaussoid as well.  Hence computations in
small dimension suffice to explain these classes.
Despite their simplicity, each of them provides higher compatibility
axioms.  For example the annihilation of \texttt{LUB} in dimension $5$
implies that every $5$-minor of a gaussoid contains an empty or a full
$3$-minor.  Or: a graphical $4$-gaussoid with no belts is full or
contains an empty $3$-minor.

\subsection{Graphical gaussoids}

Each undirected simple graph $G = ([n], E)$ defines a CI structure
$\gaussoidof G := \{(ij|K) \in \A_n : \text{$K$ separates $i$ and
$j$}\}$, where two vertices $i$ and $j$ are \emph{separated} by a set
$K$ if every path between $i$ and $j$ intersects~$K$.  These are the
\emph{separation graphoids} of~\cite{Matus97}.  They fulfill a
localized version of the global Markov property.  According to
\cite[Remark~2]{LnenickaMatus07}, separation graphoids are exactly the
gaussoids satisfying the \emph{ascension axiom}:
\begin{equation*}
  \tag{A} \label{ax:A} (ij|L) \Rightarrow (ij|kL), \qquad \forall\, i,j,k\in [n], L\subset [n]\setminus ijk.
\end{equation*}
Therefore we refer to them as \emph{ascending gaussoids}.  The
operation $G \mapsto \gaussoidof G$ is a bijection whose inverse
recovers the graph via its edges
$E = \{ij : (ij|*) \not\in \gaussoidof G\}$, where $(ij|*)$
abbreviates $(ij|[n] \setminus ij)$.  Any gaussoid in this section is
of the form $\gaussoidof G$ for some undirected simple graph~$G$.

Since~\eqref{ax:A} uses only $2$-faces of a single $3$-face of the
$n$-cube, being an ascending gaussoid is a puzzle property based in
dimension~$3$. Its basis are the ascending $3$-gaussoids. This was
shown by \Matus~\cite[Proposition 2]{Matus97} and in our terminology
it can be restated as follows

\begin{lemma} \label{lemma:ascending-forbids-lower}
A gaussoid is ascending if and only if \texttt{L} is a forbidden minor.
\qed
\end{lemma}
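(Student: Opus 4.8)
The plan is to deduce the statement from the puzzle property of ascending gaussoids established just above, which reduces everything to a finite check in dimension~$3$. Since being ascending is a puzzle property based in dimension~$3$ whose basis are the ascending $3$-gaussoids, minor-closedness together with generation-from-below gives: a gaussoid $\G$ is ascending if and only if every $3$-minor $\G \searrow C$, $C \in \C_n$, is an ascending $3$-gaussoid. Thus it suffices to identify the ascending members among the eleven $3$-gaussoids of Figure~\ref{fig:3-gaussoid-symmetry} and to recognize their complement as the $\S_3$-orbit \texttt{L}.

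First I would read off the geometric meaning of \eqref{ax:A} inside a single $3$-cube $(ijk|)$: the instance $(ij|) \Rightarrow (ij|k)$ and its two $\S_3$-images demand that whenever a \emph{lower} square is present, its parallel \emph{upper} square is present as well. Checking this against the five symmetry classes is then immediate. The empty gaussoid \texttt{E} and the upper singletons \texttt{U} contain no lower square and so satisfy \eqref{ax:A} vacuously; the full gaussoid \texttt{F} contains everything; and each belt \texttt{B}, being all but one opposing pair, consists of two complete parallel pairs of squares, so again every lower square is matched by its upper partner. Only the lower singletons \texttt{L} carry a lower square without its upper counterpart and hence violate \eqref{ax:A}. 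Consequently the ascending $3$-gaussoids are exactly \texttt{E}, \texttt{U}, \texttt{B}, \texttt{F}, and the non-ascending ones are precisely the three members of \texttt{L}.

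To finish, I would translate this into the language of forbidden minors. Because \texttt{L} is a $3$-dimensional structure, $\G$ has a minor of type \texttt{L} if and only if some $3$-minor of $\G$ lies in \texttt{L}; combined with the reduction above, $\G$ is ascending if and only if no minor of $\G$ is of type \texttt{L}. It then remains to verify minimality: every proper minor of an \texttt{L} has dimension at most~$2$, where \eqref{ax:A} is vacuous so that every such minor is ascending, whence each member of \texttt{L} is a minimal non-ascending gaussoid. Therefore \texttt{L} is a forbidden minor, and in fact the only one. The one point requiring care is the identification in the middle step: \eqref{ax:A} is invariant under $\S_3$ but not under the full cube group $B_3$, so one must respect the orientation that distinguishes \texttt{L} from its dual \texttt{U} rather than arguing modulo $B_3$. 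This directionality is exactly what makes \texttt{L} forbidden while \texttt{U} remains allowed, and it is the only subtlety in an otherwise routine verification.
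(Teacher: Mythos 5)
Your argument is correct and follows exactly the route the paper intends: the lemma is stated with an immediate \qed because the preceding paragraph already establishes that ascension is a puzzle property based in dimension~$3$ with basis the ascending $3$-gaussoids (citing Mat\'u\v{s}), and the rest is the finite check on the eleven $3$-gaussoids that you carry out, correctly identifying \texttt{L} as the unique non-ascending orbit and verifying minimality via the vacuity of \eqref{ax:A} in dimension~$\le 2$. Your remark that one must work modulo $\S_3$ rather than $B_3$ (since duality swaps \texttt{L} and \texttt{U}) is a genuine and correctly handled subtlety, consistent with the paper's treatment of \texttt{ELBF} as the dual class.
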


This shows that \texttt{EUBF} are the ascending gaussoids. Their duals
are \texttt{ELBF} and it is easy to see that their axiomatization
replaces~\eqref{ax:A} by the \emph{descension axiom}
\begin{equation*}
  \tag{D} \label{ax:D} (ij|kL) \Rightarrow (ij|L), \qquad \forall\, i,j,k\in [n], L\subset [n]\setminus ijk.
\end{equation*}
\texttt{EUBF}-gaussoids arise from undirected graphs via vertex
separation, i.e.\ $(ij|K) \in \gaussoidof G$ if and only if $i$ and
$j$ are in different connected components of $G \setminus K$.  Their
duals contain $(ij|K)$ if and only if $i$ and $j$ are in different
connected components in the induced subgraph on $ijK$.  Therefore we
call elements of $\texttt{EUBF} \cup \texttt{ELBF}$ \emph{graphical}
gaussoids.
For our classification purposes it is sufficient to study the
``\texttt{U}pper'' half of dual pairs.

Our technique to understand \texttt{EUBF} and its subclasses has
already been used in~\cite{Matus97}: since the presence of an edge
$ij$ in $G$ is encoded by the non-containment
$(ij|*) \not\in \gaussoidof G$, the compulsory minors of
$\gaussoidof G$ of the form $\gaussoidof G \searrow (ijk|*)$ prescribe
induced subgraphs on vertex triples~$ijk$.  In the opposite direction,
however, the induced $3$-subgraphs of a graph do not in general reveal
the types of all minors $\gaussoidof G \searrow (ijk|L)$ in its
corresponding gaussoid.

\begin{example}
\label{e:cycle}
Consider the cycle $\vcenter{\hbox{%
\begin{tikzpicture}[scale=0.23]
        \tikzset{every node/.style={draw,shape=circle,fill=black,minimum size=4pt,inner sep=0}}
        \tikzset{every edge/.style={draw=black,line width=1pt}}
        
        \node (N1) at (-1, -1) {};
        \node (N2) at (-1, +1) {};
        \node (N3) at (+1, +1) {};
        \node (N4) at (+1, -1) {};
        
        \path (N1) edge (N2);
        \path (N2) edge (N3);
        \path (N3) edge (N4);
        \path (N4) edge (N1);
\end{tikzpicture}%
}}$
corresponding to the gaussoid $\{(13|24), (24|13)\}$.
Its $3$-minors are exclusively \texttt{E} and \texttt{U}.
The \texttt{U} minors arise precisely in the $3$-cubes
\[
\{\w{1***}\}, \{\w{*1**}\}, \{\w{**1*}\}, \{\w{***1}\}.
\]
All other $3$-minors are \texttt{E}.  This means that the $4$-cycle is
contained in \texttt{EUBF}, \texttt{EUB}, and \texttt{EU}.  To match
with Table~\ref{tab:special-gaussoids}, check that the $4$-cycle has
no induced $3$-cycle, corresponds to the partition $13|24$ of $\{1,2,3,4\}$,
and the involution $(1\; 3)(2\; 4) \in \S_{4}$.

This graph shows that the class of a gaussoid cannot be determined
by looking only at the induced subgraphs of~$G$. All
$3$-minors observable from induced subgraphs are~\texttt{U},
but the smallest class to which this gaussoid belongs is~\texttt{EU}.
\end{example}

\begin{example}
\label{e:star}
Consider the star $\vcenter{\hbox{%
\begin{tikzpicture}[scale=0.3,rotate=60]
        \tikzset{every node/.style={draw,shape=circle,fill=black,minimum size=4pt,inner sep=0}}
        \tikzset{every edge/.style={draw=black,line width=1pt}}
        
        \node (Ni) at ( 0,    0) {};
        \node (Nj) at ( 1,    0) {};
        \node (Nk) at (-0.5,  0.866) {};
        \node (Nl) at (-0.5, -0.866) {};
        
        \path (Ni) edge (Nj);
        \path (Ni) edge (Nk);
        \path (Ni) edge (Nl);
\end{tikzpicture}%
}}$
with interior node $1$ and leaves $2,3,4$.
It corresponds to the gaussoid
\[
  \{(23|1), (23|14), (24|1), (24|13), (34|1), (34|12)\}.
\]
Because the right-hand side of every element of the gaussoid
contains $1$, this gaussoid has the minor \texttt{F} in $\w{1***}$,
\texttt{E} in the opposite face $\w{0***}$ and \texttt{U}
everywhere else.
\end{example}

We now establish relationships of subclasses of \texttt{EUBF} with
known combinatorial objects.  For some the graph
$G$ is more convenient, for others it is the complement graph
$G^c$ which is more natural.  Figure~\ref{fig:3-graphicals} shows the
complement graphs corresponding to \texttt{E}, \texttt{U}, \texttt{B}
and \texttt{F} and is useful to keep in mind for the proof of
Theorem~\ref{thm:graphicalClasses}.

\begin{figure}
\input{3-graphicals.fig}
\label{fig:3-graphicals}
\end{figure}

\begin{theorem}
\label{thm:graphicalClasses}
The gaussoids in the class \texttt{EUBF} are in bijection with the
simple undirected graphs on $n$ vertices.  The subclasses distribute
as follows:
\begin{compactenum}[(1)]
  \item \texttt{EUB} contains exactly the gaussoids $\gaussoidof G$
  such that $G^{c}$ is $K_3$-free.
  \item \texttt{UBF} contains exactly the gaussoids $\gaussoidof G$
  such that each connected component of $G$ is a path.
  \item \texttt{EUF} contains exactly the gaussoids $\gaussoidof G$
  such that in $G^{c}$ each connected component is a clique,
  and hence corresponds to partitions of the vertex set $[n]$.
  \item \texttt{EU} is \texttt{EUF} where additionally every connected
  component of $G^{c}$ has at most two vertices.
\end{compactenum}
\end{theorem}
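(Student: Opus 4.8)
The plan is to exploit that each of these classes is a puzzle property based in dimension~$3$ (Definition~\ref{def:puzzle-property}), so that membership of a graphical gaussoid $\gaussoidof{G}$ in a subclass is equivalent to every $3$-minor $\gaussoidof{G}\searrow(ijk|L)$ avoiding the excluded symmetry type; the type \texttt{L} never occurs because graphical gaussoids are ascending (Lemma~\ref{lemma:ascending-forbids-lower}). The bijection between \texttt{EUBF} and simple graphs is already recorded before the statement, with inverse $E=\{ij:(ij|*)\notin\gaussoidof{G}\}$, so the task reduces to translating ``no \texttt{F}'', ``no \texttt{B}'', and ``no \texttt{E}'' into graph conditions. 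First I would compute, using Proposition~\ref{prop:minor-via-face} together with the identity $\cond_L\gaussoidof{G}=\gaussoidof{G[\compl{L}]}$ (conditioning is vertex deletion, since $KL$ separates $i,j$ in $G$ exactly when $K$ does so in the subgraph induced on $\compl{L}$), that $\gaussoidof{G}\searrow(ijk|L)$ is the marginal separation graphoid of $H:=G[\compl{L}]$ on the triple $\{i,j,k\}$. Comparing with Figure~\ref{fig:3-graphicals} then gives the dictionary I need: the minor is \texttt{F} iff $i,j,k$ lie in three distinct components of $H$; it is \texttt{B} iff one of them is separated (in a different component) from the other two, which lie together; and it is \texttt{E} iff all three lie in one component of $H$ and deleting any single one of them leaves the other two connected.

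For part~(1), ``no \texttt{F} minor'' means no triple is ever split into three components of any $H$. Choosing $L=\compl{ijk}$ realizes $H=G[ijk]$, so an \texttt{F} minor exists as soon as $i,j,k$ are pairwise non-adjacent in $G$; conversely any \texttt{F} minor forces $i,j,k$ pairwise non-adjacent already in $G$, since non-adjacency is inherited from the induced subgraph $H$. Hence \texttt{EUB} holds iff $G$ has no independent triple, i.e.\ iff $G^c$ is $K_3$-free. Part~(4) then follows for free: \texttt{EU} forbids both \texttt{B} and \texttt{F}, so by~(1) and~(3) it says that $G^c$ is a disjoint union of cliques that is moreover triangle-free, equivalently that every component of $G^c$ has at most two vertices.

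Part~(3) is the first place where the general conditioning set $L$ matters, as warned by Examples~\ref{e:cycle} and~\ref{e:star}. The easy direction is that if $G^c$ is not a disjoint union of cliques it contains an induced path on three vertices, i.e.\ a triple spanning a single edge in $G$, which produces a \texttt{B} minor at $L=\compl{ijk}$. The hard part will be the converse: assuming $G$ is complete multipartite (equivalently $G^c$ a cluster graph), I must rule out a \texttt{B} minor for \emph{every} $L$. The key point is that complete multipartiteness is hereditary, so each $H=G[\compl{L}]$ is again complete multipartite; such a graph is connected when at least two of its parts survive into $\compl{L}$ and is edgeless otherwise. In the first case all of $i,j,k$ lie in one component, so the minor is \texttt{E} or \texttt{U} but never \texttt{B}; in the second case every triple splits into three components, giving \texttt{F} but never the ``two-plus-one'' pattern of a belt. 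Thus no \texttt{B} minor ever occurs, which proves \texttt{EUF} iff $G^c$ is a cluster graph, indexing the partitions of $[n]$.

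Part~(2) is analogous. If some component of $G$ is not a path it contains a vertex of degree $\ge 3$ or a cycle; restricting to the resulting star $K_{1,3}$ or to a shortest (hence chordless) cycle and reading the minor on an appropriate triple yields an \texttt{E} minor, since in both configurations the three vertices share a component and deleting any one of them leaves the other two connected (through the center, respectively along the surviving arc). For the converse, if every component of $G$ is a path then every induced subgraph $H=G[\compl{L}]$ is again a disjoint union of paths, and for any triple $\{i,j,k\}$ either two of them lie in different components (an unconditional separation) or they lie on a common path, where the middle vertex separates the two ends (a conditional separation); either way the minor is nonempty, so no \texttt{E} minor occurs and \texttt{UBF} holds iff every component of $G$ is a path. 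The main obstacle throughout is precisely these backward directions of~(2) and~(3): one must control separations in all induced subgraphs $G[\compl{L}]$ simultaneously, and the argument hinges on the hereditary nature of ``linear forest'' and ``complete multipartite'' together with the elementary connectivity dichotomies described above.
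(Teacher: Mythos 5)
Your proof is correct, and its forward implications (producing an \texttt{F}, \texttt{B}, or \texttt{E} minor from an independent triple, an induced path on three vertices in $G^c$, or a vertex of degree three resp.\ a cycle) essentially coincide with the paper's. Where you genuinely diverge is in the converse directions, i.e.\ in controlling the minors $\gaussoidof G \searrow (ijk|L)$ for \emph{all} conditioning sets $L$. The paper's mechanism is ascension: since $(ij|L) \Rightarrow (ij|kL)$, the minor at $(ijk|L)$ grows monotonically in $L$, so ``no \texttt{F}'' need only be checked at $L = \compl{ijk}$ and ``no \texttt{E}'' only at $L = \emptyset$; for the remaining case (ruling out \texttt{B} in \texttt{EUF} when $i,j,k$ share a component) the paper runs an explicit path-rerouting argument through an auxiliary vertex $l$ adjacent to $j$. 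You instead establish the identity $\cond_L\gaussoidof G = \gaussoidof{G[\compl L]}$, build a complete dictionary between the type of a $3$-minor and the component pattern of $\{i,j,k\}$ in $G[\compl L]$ (three components give \texttt{F}, two-plus-one gives \texttt{B}, a common component gives \texttt{E} or \texttt{U}), and then invoke hereditariness of the relevant graph classes: induced subgraphs of complete multipartite graphs are connected or edgeless, and induced subgraphs of linear forests are linear forests. This yields a uniform treatment of all four subclasses and, in particular, replaces the paper's ad hoc path construction in part~(3) by the clean ``connected or edgeless'' dichotomy, at the modest cost of verifying the dictionary once; the paper's route is shorter given that ascension is already available, but both arguments are complete.
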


\begin{proof} The first statement summarizes the discussion in the
beginning of this section.
\begin{inparaenum}[(1)]
\item The graphs $G^{c}$ for $\gaussoidof G \in \texttt{EUB}$ are free
of triangles, as seen in Figure~\ref{fig:3-graphicals}.  If conversely
$G^{c}$ is triangle-free, then $\gaussoidof{G}$ does not have \texttt{F}
among its minors $(ijk|*)$.  By ascension, the cardinality of
$\gaussoidof G \searrow (ijk|L)$ is monotone in $L$ and thus no minor
of $\gaussoidof G$ is~\texttt{F}.

\item For $\gaussoidof G \in \texttt{UBF}$ we first show that every
vertex of $G$ has degree at most two.  Suppose a vertex $i$ was
adjacent to three distinct vertices $j, k, l$.  The subgraph induced
on $ijkl$ is the star discussed in Example~\ref{e:star} since $i$ has
degree three in this subgraph but none of its induced $3$-subgraphs
can be complete.  The corresponding gaussoid has \texttt{E} as a minor
and therefore this situation cannot arise in~$G$.  Therefore $G$ is a
disjoint union of cycles and paths.  If $G$ contains a cycle, let
$i,j,k$ be vertices of that cycle.  Since cycles are $2$-connected,
neither $(ij|k)$, nor $(ik|j)$, nor $(jk|i)$ is in $\gaussoidof G$.
Consequently, the minor $\gaussoidof G \searrow (ijk|) = \texttt{E}$
and thus $G$ contains no cycles.

Let now $G$ be a forest of paths.  Consider any three vertices
$i,j,k$.  If they are not all in the same connected component, say
$i,j$ are in different connected components, then
$(ij|), (ij|k) \in \gaussoidof G \searrow (ijk|)$ and thus this minor
is not \texttt{E}.  If $i,j,k$ are in the same connected component,
then, after suitable renaming, $i$ and $j$ on this path become
disconnected after removing~$k$.  Then $(ij|k) \in \gaussoidof G
\searrow (ijk|)$ and this minor is not \texttt{E}.  In both cases,
with ascension, it follows that for every $L \subset [n]\setminus ijk$
the minor $\gaussoidof G \searrow (ijk|L)$ is not \texttt{E}.

\item Let $\gaussoidof G \in \texttt{EUF}$.  The induced subgraphs of
$G^{c}$ on three vertices are precisely those which are closed
under the reachability relation within that subgraph. It is then clear
that every two vertices in the neighborhood of a fixed vertex are
connected by an edge, hence every connected component is a clique.

Let $G^{c}$ be a disjoint union of cliques and $i,j,k \in [n]$.
If they lie in pairwise different connected components, then the
$(ijk|*)$-minor of $\gaussoidof G$ is \texttt{E}; if exactly two of
them are in one component, then that minor is \texttt{U}. By ascension,
none of the minors $(ijk|L)$ can be \texttt{B} in these cases.
Finally suppose that $i,j,k$ are in the same connected component
and that $\gaussoidof G \searrow (ijk|L)$ is a belt containing,
say, $(ij|L)$ and $(ik|L)$ but not $(jk|L)$. Then $G$ contains a
path from $j$ to $k$ avoiding $L$. Because $jk$ is an edge in $G^c$,
this path contains another vertex $l \in [n]\setminus Lijk$ which is
adjacent to $j$. Since $jl$ is a non-edge in $G^c$ and $i$ and $j$
are in the same clique, $i$ and $l$ are adjacent in $G$.
This provides a path from $i$ over $l$ to $k$ in $G$ which avoids
$L$, contradicting the assumption.

\item Since $\texttt{EU} = \texttt{EUF} \cap \texttt{EUB}$, every
component of $G^c$, for $\gaussoidof G \in \texttt{EU}$, is a
clique but since there are also no induced $3$-cliques, the claim
follows.
\end{inparaenum}
\end{proof}

\begin{remark}
Motivated by the theory of databases, \Matus~\cite[Consequence~4]{Matus97}
also considered ascending gaussoids of chordal graphs. These have one
forbidden $4$-minor in addition to the compulsory $3$-minors \texttt{EUBF}.
In general, classes of graphs with prescribed induced subgraphs on
vertex sets $I$ can be studied from the gaussoid perspective by choosing
appropriate compulsory $(I|*)$-minors.
\end{remark}

The only graphical classes left are the subclasses of
$\texttt{EBF} = \texttt{EUBF} \cap \texttt{ELBF}$. These
\emph{bi-monotone} gaussoids are simultaneously ascending and descending because \texttt{L} and
\texttt{U} are forbidden.
A bi-monotone gaussoid $\gaussoidof G$ is fixed by the symbols $(ij|)$
it contains. Such gaussoids can be seen as irreflexive, symmetric,
binary relations on $[n]$.

\begin{lemma} \label{lemma:EBF}
\texttt{EBF}-gaussoids are in bijection with the partitions of $[n]$.
\end{lemma}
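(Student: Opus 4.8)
The plan is to exploit the relational description of bi-monotone gaussoids recorded just above the lemma and then to match the resulting combinatorial constraint with the defining property of partitions. Since \texttt{L} and \texttt{U} are forbidden, an \texttt{EBF}-gaussoid~$\G$ is simultaneously ascending and descending, so by \eqref{ax:A} and \eqref{ax:D} the membership $(ij|K) \in \G$ does not depend on the conditioning set~$K$. Thus $\G$ is completely encoded by the symmetric, irreflexive relation $R := \{ij : (ij|) \in \G\}$ on $[n]$, and the assignment $\G \mapsto R$ is injective; it remains to determine exactly which relations $R$ arise this way.

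First I would read off the possible $3$-minors. Fix a cube $(ijk|L)$; because membership is independent of the conditioning set, the minor $\G \searrow (ijk|L)$ is governed solely by how many of the three pairs $ij, ik, jk$ lie in~$R$. Three pairs yield all six squares, i.e.\ \texttt{F}; two pairs yield a belt \texttt{B}; zero pairs yield the empty minor \texttt{E}. The remaining case of exactly one pair, say $ij \in R$ but $ik, jk \notin R$, produces the two opposing squares $(ij|)$ and $(ij|k)$, which violate \eqref{ax:G4} and so is not a gaussoid at all. Consequently a relation $R$ gives an \texttt{EBF}-gaussoid if and only if no triple of $[n]$ contains exactly one $R$-edge; in that case all $3$-minors land in $\{\texttt{E}, \texttt{B}, \texttt{F}\}$, so by the puzzle property of \texttt{EBF} the embedding is indeed an \texttt{EBF}-gaussoid, and $\G \mapsto R$ is onto the admissible relations.

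The last step is to recognize this forbidden-triple condition as the signature of a partition. I would pass to the complementary relation $\compl{R}$ on distinct pairs: a triple carrying exactly one $R$-edge is precisely a triple carrying exactly two $\compl{R}$-edges, that is, an induced path on three vertices. Thus $R$ has no single-edge triple if and only if $\compl{R}$ contains no induced $3$-path, which is the classical characterization of a disjoint union of cliques. The cliques of $\compl{R}$ are then the blocks of a partition $P$ of $[n]$, and $R$ records exactly the pairs lying in distinct blocks. Composing the mutually inverse maps $\G \mapsto R \mapsto P$ yields the asserted bijection.

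The main obstacle is the middle step: correctly enumerating the four cases of the local $3$-minor and, in particular, verifying that the single-edge configuration is excluded because it fails \eqref{ax:G4}. Once this local dichotomy is in place, the identification with partitions is the standard fact that $P_3$-free graphs are exactly the cluster graphs.
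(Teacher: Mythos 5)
Your proposal is correct and follows essentially the same route as the paper's proof: bi-monotonicity reduces the gaussoid to a symmetric irreflexive relation, the only non-trivial constraint is the one coming from \eqref{ax:G4} (your ``no triple with exactly one edge'' is precisely the paper's restatement $(ij|) \Rightarrow (ik|) \vee (jk|)$, i.e.\ transitivity of the complement), and the complement being a disjoint union of cliques is the same as being an equivalence relation. Your version merely spells out the $3$-minor case analysis and phrases the last step via $P_3$-free graphs rather than transitivity, which is a cosmetic difference.
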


\begin{proof}
Consider the gaussoid axioms under bi-monotonicity.  Axioms
\eqref{ax:G1}--\eqref{ax:G3} are trivial in the presence of ascension
and descension axioms, and \eqref{ax:G4} becomes
$(ij|) \Rightarrow (ik|) \vee (jk|)$. In terms of binary relations,
this is transitivity of the complement of~$\gaussoidof G$. Hence
\texttt{EBF}-gaussoids are complements of equivalence relations
on~$[n]$.
\end{proof}

A subclass of bi-monotone gaussoids is obtained by forbidding the
empty minor in addition to the forbidden singletons. The resulting
\texttt{BF}-gaussoids only have $3$-minors of cardinality at least
four and are called \emph{dense} gaussoids.

\begin{lemma} \label{lemma:BF}
The dense gaussoids \texttt{BF} correspond to involutions on $[n]$.
\end{lemma}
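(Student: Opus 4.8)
The plan is to build directly on Lemma~\ref{lemma:EBF}, which identifies the ambient class \texttt{EBF} with partitions of $[n]$, and to determine exactly which partitions remain once the empty minor \texttt{E} is forbidden as well. Since \texttt{BF} is obtained from \texttt{EBF} by also excluding \texttt{E} from the basis, it suffices to characterize the \texttt{EBF}-gaussoids that have no \texttt{E}-minor.

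First I would recall the bijection behind Lemma~\ref{lemma:EBF}. A bi-monotone gaussoid $\gaussoidof G$ is determined by the symmetric relation $R = \{ij : (ij|) \in \gaussoidof G\}$, and \eqref{ax:G4} forces the complementary relation $S = \compl{R}$ to be transitive, hence an equivalence relation; its classes are the blocks of the associated partition, and $(ij|) \in \gaussoidof G$ holds precisely when $i$ and $j$ lie in distinct blocks. The key step is then to read off an arbitrary $3$-minor. By ascension and descension, for any cube $(ijk|L)$ the membership of each of its faces collapses to that of the corresponding empty-conditioning symbol, so $\gaussoidof G \searrow (ijk|L)$ is governed solely by which of the pairs $ij$, $ik$, $jk$ belong to $R$. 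Counting faces, zero, two, or three present pairs yield \texttt{E}, \texttt{B}, or \texttt{F} respectively, while a single present pair is excluded by \eqref{ax:G4}; in particular this minor is \texttt{E} exactly when none of the three pairs is present, i.e.\ when $i$, $j$, $k$ all share one block.

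It then follows that $\gaussoidof G$ is dense (has no \texttt{E}-minor) if and only if no block of the partition contains three elements, that is, every block has size at most two. Such partitions are precisely the partitions of $[n]$ into singletons and pairs, which biject with involutions by sending singleton blocks to fixed points and two-element blocks to transpositions. I expect the only point needing care to be the face count in the previous paragraph---verifying that bi-monotonicity pairs up opposite faces so that the \texttt{L} and \texttt{U} types cannot occur and \texttt{E}, \texttt{B}, \texttt{F} are the sole possibilities---which makes forbidding \texttt{E} the single new constraint and reduces the lemma to the elementary combinatorics of matchings.
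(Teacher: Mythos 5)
Your proposal is correct and follows essentially the same route as the paper: both directions rest on the partition bijection of Lemma~\ref{lemma:EBF}, read off each $3$-minor from the pairs $(ij|)$ via bi-monotonicity, and conclude that density is equivalent to every block having size at most two, i.e.\ to an involution. The only cosmetic difference is that you classify all possible $3$-minor types (\texttt{E}, \texttt{B}, \texttt{F} for zero, two, three present pairs), whereas the paper argues the forward direction by noting that disjoint transpositions cannot both meet a single $3$-face.
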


\begin{proof}
Let $\iota$ be an involution and $\gaussoidof G$ the
\texttt{EBF}-gaussoid associated, by Lemma~\ref{lemma:EBF}, to $\iota$'s
disjoint cycle decomposition.
Since $\iota$ is an involution, every cycle is either a fixed point or
a transposition. Take any two disjoint cycles $(i\; j)$ and $(k\; l)$
in $\iota$. Since $ij \cap kl = \emptyset$, no two symbols of the form
$(ij|K)$ and $(kl|M)$ appear in the same $3$-face, for any choice of
$K$ and $M$. This implies that every $3$-minor of $\gaussoidof G$ can
miss at most a single pair of opposite squares, which shows density.

Conversely, let $\gaussoidof G$ be a dense gaussoid. Consider the
partition corresponding to $\gaussoidof G$ as an
\texttt{EBF}-gaussoid. Assume there is a block containing at least
three distinct elements $i, j, k$, then $\gaussoidof G$ would not
contain $(ij|)$, $(ik|)$ and $(jk|)$, which is a contradiction to
$\gaussoidof G$ being dense at the $(ijk|)$-minor.
\end{proof}

\begin{lemma} \label{lemma:EB}%
An \texttt{EB}-gaussoid is defined by its characteristic vector with
respect to $(12|), (13|), (14|), \ldots, (1n|)$ and every such vector
defines an \texttt{EB}-gaussoid.
\end{lemma}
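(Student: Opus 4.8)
The plan is to identify \texttt{EB}-gaussoids, sitting inside \texttt{EBF}, with the partitions of $[n]$ into at most two blocks, and then to observe that such a partition is recorded faithfully by the characteristic vector on $(12|), \ldots, (1n|)$. Since $\texttt{EB} \subset \texttt{EBF}$, every \texttt{EB}-gaussoid is bi-monotone, so that membership of a symbol $(ij|K)$ depends only on the pair $\{i,j\}$, and by Lemma~\ref{lemma:EBF} it is the complement of an equivalence relation $\sim$ on $[n]$, with $(ij|) \in \gaussoidof{G} \iff i \not\sim j$. My first step is to pin down which of these partitions satisfy the extra constraint of \texttt{EB}, namely that no $3$-minor is \texttt{F}. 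By bi-monotonicity the minor $\gaussoidof{G} \searrow (ijk|L)$ has the same type as $\gaussoidof{G} \searrow (ijk|)$ for every $L$, and among bi-monotone gaussoids this type is \texttt{F} exactly when all three pairs $\{i,j\}$, $\{i,k\}$, $\{j,k\}$ are present, i.e.\ when $i,j,k$ lie in three distinct blocks of $\sim$. Hence forbidding \texttt{F} is equivalent to the absence of three pairwise $\sim$-inequivalent elements, which says precisely that $\sim$ has at most two blocks.

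For injectivity I would set $B := \{j \in [n] : 1 \not\sim j\}$, the block of $\sim$ not containing $1$ (with $1 \notin B$ by reflexivity, and $B = \emptyset$ when $\sim$ is the one-block partition). A partition with at most two blocks is determined by $B$: either $B = \emptyset$ and the partition is the single block $[n]$, or the two blocks are $B$ and $[n] \setminus B$. Since $(1j|) \in \gaussoidof{G} \iff 1 \not\sim j \iff j \in B$, the characteristic vector of $\gaussoidof{G}$ with respect to $(12|), (13|), \ldots, (1n|)$ is exactly the indicator of $B$ inside $\{2, \ldots, n\}$. Thus the vector recovers $B$, hence the partition, hence by Lemma~\ref{lemma:EBF} the gaussoid itself.

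Surjectivity runs the same bijection backwards: given $v \in \{0,1\}^{n-1}$, put $B := \{j : v_j = 1\} \subseteq \{2, \ldots, n\}$ and take the partition whose block through $1$ is $\{1\} \cup (\{2, \ldots, n\} \setminus B)$ and whose remaining block is $B$; this has at most two blocks. Lemma~\ref{lemma:EBF} turns it into an \texttt{EBF}-gaussoid, and the characterization of the first paragraph guarantees that it has no \texttt{F} minor, so it is an \texttt{EB}-gaussoid, and by construction its characteristic vector is $v$. The argument is short once Lemma~\ref{lemma:EBF} is in hand; the only point that needs genuine care is the reduction of the forbidden-\texttt{F} condition from an arbitrary cube $(ijk|L)$ to the base cube $(ijk|)$, which is exactly where bi-monotonicity does the work.
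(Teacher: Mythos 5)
Your proof is correct, and it is organized differently from the paper's. The paper argues locally: for an \texttt{EB}-gaussoid the minor $\gaussoidof{G} \searrow (1ij|)$ is either \texttt{E} or a belt, and since a belt of a bi-monotone gaussoid contains exactly two of the three pairs, the status of $(ij|)$ is forced (it is present if and only if exactly one of $(1i|)$, $(1j|)$ is present); the converse direction is dispatched in one sentence by noting that this reconstruction rule freely produces a set all of whose $3$-minors are \texttt{E} or \texttt{B}. You instead route everything through Lemma~\ref{lemma:EBF}: you observe that among \texttt{EBF}-gaussoids forbidding \texttt{F} is equivalent to the associated partition having at most two blocks, and that such a partition is determined by the block not containing $1$, which is read off from the characteristic vector. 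Your version buys a cleaner global statement (\texttt{EB}-gaussoids correspond to partitions of $[n]$ into at most two blocks, hence to subsets of $\{2,\dots,n\}$, matching the table's ``Subsets of $[n-1]$'' entry) and a fully explicit surjectivity argument, whereas the paper's version is self-contained at the level of minors and makes the reconstruction rule (the XOR of $(1i|)$ and $(1j|)$) visible. You correctly identified and justified the one delicate reduction, namely that bi-monotonicity lets you test the forbidden-\texttt{F} condition on the base cubes $(ijk|)$ alone.
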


\begin{proof}
Let $\gaussoidof G$ be an \texttt{EB}-gaussoid and $i,j \neq 1$ be
distinct.  Consider the $(1ij|)$-minor of~$\gaussoidof G$.  Looking up
$(1i|)$ and $(1j|)$ in the characteristic vector, we can decide
whether $\gaussoidof G \searrow (1ij|)$ is empty or a belt. In either
case the containment of $(ij|)$ in $\gaussoidof G$ is determined by
the status of $(1i|)$ and~$(1j|)$.  Vice versa, this reconstruction
method freely defines a gaussoid all whose minors are necessarily
\texttt{E} or \texttt{B}.
\end{proof}

\begin{remark}
We consider it an interesting topic to determine properties beyond
combinatorics of the tamer graphical classes. For example, the
\texttt{EUBF}-gaussoids are precisely the positively orientable
gaussoids (see \cite[Section~5]{Geometry} for the precise definition),
their duals \texttt{ELBF} are the negatively orientable ones.  It can
also be shown that a \texttt{BF}-gaussoid $\gaussoidof G$ has exactly
$2^t$ orientations where $t$ is the number of transpositions in the
involution associated with~$\gaussoidof G$. All graphical gaussoids
are realizable.
\end{remark}

\subsection{The exceptional class}

The class \texttt{LUBF} remains mysterious.  We have tried various
arithmetic operations to transform the counts before searching OEIS,
but nothing emerged.  Unlike graphical gaussoids, there exist
non-orientable and hence non-realizable $\texttt{LUBF}$-gaussoids.
The following table lists their counts, for which likewise no
interpretation is known to the authors.

\begin{table}[ht]
\begin{tabular}{c||c|c|c|c|c|c}
$n$                 &  3 &   4 &      5 &       6 &        7 &           8 \\
\hline
all $\texttt{LUBF}$ & 10 & 142 & 1\,166 & 12\,796 & 183\,772 & 3\,221\,660 \\
non-orientable      &  0 &  42 &    210 &  1\,260 &  14\,700 &    355\,740
\end{tabular}
\end{table}

It is remarkable that all these numbers are divisible by $42$ and the
numbers for dimensions $\ge 5$ even by $210$.  As a first step towards
understanding whether some structure underlies these numbers, we pose

\begin{challenge}
Find or disprove the existence of a finite forbidden minor characterization
for non-orientable \texttt{LUBF}-gaussoids.
\end{challenge}

Our taxonomy of special gaussoids displays a trichotomy of growth
behaviors: double exponential, single exponential or bounded in the
dimension.  We do not know where the growth of \texttt{LUBF} falls.
Since the number of \texttt{LUBF}-gaussoids appears to grow slower
than the number of ascending gaussoids, we make the following

\begin{conjecture}
There is a single exponential upper bound for the number of
\texttt{LUBF}-gaussoids in fixed dimension $n$.
\end{conjecture}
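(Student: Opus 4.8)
The plan is to exhibit a certificate of size $O(n^2)$ (hence single-exponentially many values) that uniquely determines each \texttt{LUBF}-gaussoid. Recall that \texttt{LUBF} is exactly the puzzle class whose basis omits only \texttt{E}; equivalently, by Lemma~\ref{lemma:puzzling} an $n$-gaussoid $\G$ lies in \texttt{LUBF} precisely when no $3$-minor $\G\searrow C$ equals the empty gaussoid \texttt{E}. Stratify the squares of $\A_n$ by \emph{layer} $|K|$, so that in any $3$-face $C=(ijk|L)$ the three \emph{lower} squares $(ij|L),(ik|L),(jk|L)$ sit one layer below the three \emph{upper} squares $(ij|kL),(ik|jL),(jk|iL)$. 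The engine of the proof is the following local rigidity, read off directly from the eleven $3$-gaussoids of Figure~\ref{fig:3-gaussoid-symmetry}: whenever at least one lower square is present, the membership of the three lower squares already determines the type of $\G\searrow C$, hence all three upper squares — three present lowers force \texttt{F}, two present lowers force the unique belt \texttt{B} omitting the pair opposite to the absent lower, and one present lower forces the corresponding lower singleton \texttt{L}. The single exception is when all three lower squares are absent: then the defining constraint of \texttt{LUBF} rules out \texttt{E} and forces $\G\searrow C$ to be an upper singleton \texttt{U}, leaving a genuine threefold choice of which upper square lies in~$\G$.

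Second, I would run this rigidity as a bottom-up propagation. Starting from the bottom layer $\G_0=\{(ij|)\in\G\}\subseteq\binom{[n]}{2}$, which costs $\binom{n}{2}$ bits, the lemma determines the membership of every higher square $(ij|K)$ as soon as some descent $k\in K$ lands it in a $3$-face with a present lower square. Thus $\G$ is pinned down everywhere except on the \emph{free} squares — those lying above a $3$-face all of whose lowers are absent, where a \texttt{U}-choice must be recorded. Because a forced \texttt{U} is precisely the local shape of an ascending (graphical) gaussoid, my expectation is that the free squares assemble, across all layers, into the embedding of a single graphical gaussoid on $[n]$, and are therefore encodable by one further graph, i.e.\ another $\binom{n}{2}$ bits. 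Combining $\G_0$ with this graph would yield an $O(n^2)$-bit certificate and the bound $|\texttt{LUBF}_n|\le 2^{O(n^2)}$, matching the single-exponential regime of the graphical classes in Table~\ref{tab:special-gaussoids}.

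The hard part is controlling the interface between the propagated region, where $\G$ is rigid, and the free region, where \texttt{U}-choices live: one must prove that the transition between layers with present and absent lowers cannot manufacture independently selectable bits at an exponential rate. This is exactly the freedom that drives the double-exponential count of unrestricted gaussoids in Theorem~\ref{theorem:bounds}, and the entire content of the conjecture is that forbidding \texttt{E} suppresses it. I would attack this by a monotonicity or potential argument tracking the absent-lower regions up the layers, aiming to show that each free square is already forced by its graphical neighbours, so that the \texttt{U}-data really does collapse to a single graph rather than an exponential family. By self-duality of \texttt{LUBF} (which swaps \texttt{L} with \texttt{U} and fixes \texttt{E}, \texttt{B}, \texttt{F}), the symmetric top-down propagation has its free squares only above all-\emph{upper}-absent $3$-faces, where \texttt{L} is forced; playing the two directions against each other may further pin the free region.

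As a partial first step one can already dispose of the realizable members: by Remark~\ref{rem:independent-set-technique}, Nelson's theorem bounds the realizable gaussoids single-exponentially, so the entire difficulty concentrates on the \emph{non-realizable} \texttt{LUBF}-gaussoids — precisely the non-orientable ones tabulated above — to which neither Nelson's bound nor the graphical bijections of Theorem~\ref{thm:graphicalClasses} apply, and for which the combinatorial propagation argument above appears unavoidable.
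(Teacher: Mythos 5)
First, a point of order: the statement you were given is a \emph{conjecture} in the paper, not a theorem. The authors explicitly do not prove it; the only support they offer is the density observation that every \texttt{LUBF}-gaussoid contains at least $\alpha(Q(n,3,2,2)) \ge \delta n^2 2^n$ squares, obtained from an independent set in $Q(n,3,2,2)$. So there is no proof in the paper to compare against, and your text must be judged as an attempt at an open problem. As such, it is not a proof. To your credit, the ingredients you do establish are correct: your local rigidity analysis of the eleven $3$-gaussoids is right (three present lowers force \texttt{F}, two force the unique belt omitting the opposite pair above the absent lower, one forces the lower singleton, and only the all-lowers-absent case is ambiguous, where forbidding \texttt{E} forces a \texttt{U} with a threefold choice); \texttt{LUBF} is indeed closed under duality; and Nelson's theorem does dispose of the realizable members single-exponentially, consistent with Remark~\ref{rem:independent-set-technique}.

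The genuine gap is that your decisive step---the claim that the free \texttt{U}-choices across all layers collapse to a single graph's worth of data, yielding an $O(n^2)$-bit certificate---is exactly the conjecture restated, and you offer it only as an ``expectation'' to be attacked by an unspecified monotonicity or potential argument. The propagation skeleton by itself gives nothing better than trivial bounds: if $\G_0 = \emptyset$, then \emph{every} bottom cube $(ijk|)$ is free, so before global consistency is imposed there are already $3^{\binom{n}{3}}$ candidate choice functions $f\colon \binom{[n]}{3} \to [n]$, $f(T) \in T$, at the first layer alone, and free cubes can recur at every layer among the $\Theta(n^3 2^n)$ cubes of the $n$-cube---precisely the freedom that produces the double-exponential count in Theorem~\ref{theorem:bounds}. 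What is missing is any argument that the consistency constraints between overlapping cubes (each freely chosen upper square re-enters higher cubes as a lower square, where rigidity then applies) prune this tree to polynomially many independent bits; your ``interface'' paragraph names this problem but does not solve it. Be cautious, too, with the specific collapse you predict: there exist non-orientable, hence non-realizable, \texttt{LUBF}-gaussoids ($42$ already for $n=4$), while all graphical gaussoids are realizable, so the free region cannot in general be literally ``one graphical gaussoid'' sitting inside $\A_n$; any correct formulation must accommodate these exceptional examples, and nothing in your sketch does. In short: correct local lemma, correct reductions at the margins, but the core of the conjecture is assumed rather than proved.
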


Support for this conjecture comes from the fact that forbidding \texttt{E}
as a minor leads to a high density, that is many squares, in the resulting
gaussoids.  To see this take an independent set in $Q(n,3,2,2)$.
Each of the minors indexed by that set contains at least one $2$-face
and the independence ensures that no $2$-face is counted twice.
Thus an \texttt{LUBF}-gaussoid has at least
$\alpha (Q(n,3,2,2)) \ge \delta n^2 2^n$ elements, with a positive
constant~$\delta$ independent of $n$.  We suspect that containing a
positive fraction of all squares is sufficient for \texttt{LUBF} to
have single exponential size.

\leavevmode

\pagebreak

\nocite{OEIS}
\bibliographystyle{alpha}
\bibliography{kybernetika}

\bigskip \medskip

\noindent
\footnotesize {\bf Authors' addresses:}

\smallskip

\noindent Tobias Boege, OvGU Magdeburg, Germany,
{\tt tobias.boege@ovgu.de}

\noindent Thomas Kahle, OvGU Magdeburg, Germany,
{\tt thomas.kahle@ovgu.de}

\end{document}
